\tikzset{
  symbol/.style={
    draw=none,
    every to/.append style={
      edge node={node [sloped, allow upside down, auto=false]{$#1$}}}
  }
}
\tikzset{edge/.style= {line width=0.75pt} }
\tikzset{commutative diagrams/.cd,
	mysymbol/.style={start anchor=center,end anchor=center,draw=none}
}
\newtheoremstyle{break}
{\topsep}{\topsep}%
{\itshape}{}%
{\bfseries}{}%
{\newline}{}%
\theoremstyle{definition}
\newtheorem{definition}{Definition}[section]
\newtheorem{example}[definition]{Example}
\theoremstyle{plain}
\newtheorem{theorem}[definition]{Theorem}
\newtheorem{theoremLetter}{Theorem}
\newtheorem{corollary}[definition]{Corollary}
\newtheorem{lemma}[definition]{Lemma}
\theoremstyle{remark}
\newtheorem{remark}[definition]{Remark}
\newtheorem{notation}[definition]{Notation}
\numberwithin{equation}{section}
\DeclareMathAlphabet\wiskunde{U}{msb}{m}{n}
\newcommand{\N}{{\wiskunde N}}
\newcommand{\Z}{{\wiskunde Z}}
\newcommand{\R}{{\wiskunde R}}
\newcommand{\semi}{{\wiskunde o}}
\DeclareMathAlphabet\gotisch{U}{euf}{m}{n}
\DeclareMathOperator{\Aut}{Aut}
\DeclareMathOperator{\End}{End}
\DeclareMathOperator{\Inn}{Inn}
\DeclareMathOperator{\Id}{Id}
\DeclareMathOperator{\Stab}{Stab}
\DeclareMathOperator{\Spec}{Spec_R}
\DeclareMathOperator{\Gl}{GL}
\DeclareMathOperator{\Cay}{Cay}
\DeclareMathOperator{\Image}{Im}
\DeclareMathOperator{\rank}{rank}
\DeclareMathOperator{\diag}{diag}
\newcommand*{\olvarphi}{\overline\varphi}
\newcommand*{\olFa}{\overline{F^a}}
\newcommand*{\olF}{\overline F}
\newcommand{\norm}[1]{\vert #1 \vert}
\newcommand{\biggg}{\bBigg@\thr@@}
\newcommand{\Biggg}{\bBigg@{3}}
\title{The twisted conjugacy growth of virtually abelian groups}
\author{Karel Dekimpe\thanks{Supported by Methusalem grant METH/21/03 - long term structural funding of the Flemish Government.}, Maarten Lathouwers\thanks{Researcher funded by FWO PhD-fellowship fundamental research (file number: 1102424N).}\\
	Karel.Dekimpe\@@kuleuven.be, Maarten.Lathouwers\@@kuleuven.be}
\date{\today}
\begin{document}
\maketitle
%%%%%%%%%%%%%%%%%%%%%%%%%%%%%%%%%%%%%%%%%%%%%%%%%%%%%%%%%%%%%%%%%%%%%%%%%%%%%%%%%%%%%%%%%%%%%%%%%%%%%%%%%%%%%%%%%%%%%%%%%%%%%%%%%
%%%%%%%%%%%%%%%%%%%%%%%%%%%%%%%%%%%%%%%%%%%%%%%%%%%%%%%%Introduction%%%%%%%%%%%%%%%%%%%%%%%%%%%%%%%%%%%%%%%%%%%%%%%%%%%%%%%%%%%%%
%%%%%%%%%%%%%%%%%%%%%%%%%%%%%%%%%%%%%%%%%%%%%%%%%%%%%%%%%%%%%%%%%%%%%%%%%%%%%%%%%%%%%%%%%%%%%%%%%%%%%%%%%%%%%%%%%%%%%%%%%%%%%%%%%

\begin{abstract}
In this paper, we study the asymptotics of several growth functions related to twisted conjugacy on virtually abelian groups. First, we study the twisted conjugacy growth function, which counts the number of twisted conjugacy classes intersecting the ball of radius $r$ around the identity element. Thereafter we study the function that measures the size of the intersection of a given twisted conjugacy class with the balls around the identity element. Finally, we study the number of induced twisted conjugacy classes in certain finite quotients of the given virtually abelian group. In each of these cases we obtain a polynomial asymptotic behaviour of these growth functions.
\end{abstract}
\section*{Introduction}
For any group $G$ that is finitely generated by $S$, we will consider the nested sets $B_G^S(r)$ (with $r\in \N_{>0}$) of $G$ defined by
    \[  B_G^S(r):=\{s_1\dots s_k \:\vert \: k\in \{0,1,\dots,r\}\text{ and } s_1,\dots,s_k\in S\cup S^{-1}\} \]
where $S^{-1}:=\{s^{-1}\:\vert\: s\in S\}$. The \textit{conjugacy growth function} counts the number of conjugacy classes of $G$ that intersect these sets $B_G^S(r)$. In 1988, Babenko was the first to introduce this function (\cite{babe88}). Over the years, many researchers studied the conjugacy growth function of a wide range of groups, for example solvable groups (\cite{bc10}), linear groups (\cite{bclm13}), graph products (\cite{chm23}), $\dots$. We refer to \cite{gs10} for a more extensive and detailed overview.

Instead of considering regular conjugacy, we consider \textit{twisted conjugacy}. More precisely, if $\varphi\in \End(G)$ we say that $x,y\in G$ are $\varphi$-conjugate if there exists some $z\in G$ such that $x=zy\varphi(z)^{-1}$. The equivalence classes with respect to this relation are called the \textit{$\varphi$-twisted conjugacy classes} of $G$ and are denoted as $[x]_{\varphi}$. Similarly as above, we define the \textit{twisted conjugacy growth function} $f_R^S(r)$ to be the number of $\varphi$-twisted conjugacy classes that intersect the set $B_G^S(r)$. In this paper, we start in Section \ref{sec:tc growth Z^n} by determining the twisted conjugacy growth function of finitely generated free abelian groups.

\begin{theoremLetter}\label{thmL:twisted conjugacy growth Zn}
	Let $S$ be any finite generating set of $\Z^n$ and $\varphi\in \End(\Z^n)$, then
	\[ f_R^S(r)=\Theta(r^{n-\rank(\Id_{\Z^n}-\varphi)}) \text{ as } r\to \infty. \]
\end{theoremLetter}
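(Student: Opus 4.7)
My plan is to start by identifying the twisted conjugacy classes of $\Z^n$ explicitly. Since $\Z^n$ is abelian, the relation $x = z+y-\varphi(z)$ rewrites as $x-y=(\Id-\varphi)(z)$, so the $\varphi$-twisted conjugacy classes are exactly the cosets of the subgroup $H:=\Image(\Id_{\Z^n}-\varphi)\leq \Z^n$. Hence $f_R^S(r)$ equals the cardinality of $\pi(B_{\Z^n}^S(r))$, where $\pi\colon \Z^n \twoheadrightarrow \Z^n/H$ is the quotient map. This already reduces the theorem to estimating the number of cosets of $H$ that the ball of radius $r$ visits.

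For the next step I would use the fact that any two word metrics on $\Z^n$ are bi-Lipschitz equivalent: if the conclusion $f_R^S(r)=\Theta(r^{n-\rank(\Id-\varphi)})$ holds for one finite generating set, it holds for every finite generating set, since replacing $S$ by another generating set only rescales the radius by a bounded factor. This lets me choose a convenient generating set. By the Smith normal form applied to the inclusion $H \hookrightarrow \Z^n$, I can pick a basis $f_1,\dots,f_n$ of $\Z^n$ and positive integers $d_1,\dots,d_k$ (with $k=\rank(\Id-\varphi)=\rank H$) such that $d_1f_1,\dots,d_kf_k$ is a basis of $H$. Using $S=\{f_1,\dots,f_n\}$ as generating set, the ball $B_{\Z^n}^S(r)$ becomes the $\ell^1$-ball of radius $r$ in the coordinates $(a_1,\dots,a_n)$, and the quotient is $\Z^n/H \cong (\Z/d_1)\oplus\cdots\oplus(\Z/d_k)\oplus\Z^{n-k}$.

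The counting is then straightforward. An element $(t_1,\dots,t_k,b_{k+1},\dots,b_n)$ of this quotient lies in $\pi(B^S_{\Z^n}(r))$ iff we can pick representatives $a_i\equiv t_i\pmod{d_i}$ for $i\leq k$ and set $a_j=b_j$ for $j>k$ such that $\sum_i |a_i|\leq r$. Choosing the minimal residue $|a_i|<d_i$ for $i\le k$ contributes at most a constant $C:=\sum_{i\le k}(d_i-1)$, so the point lies in the image as soon as $\sum_{j>k}|b_j|\le r-C$; conversely any point in the image satisfies $\sum_{j>k}|b_j|\le r$. The torsion part contributes the constant factor $|T|=\prod d_i$, while the number of $b\in\Z^{n-k}$ with $\|b\|_1\le r$ is $\Theta(r^{n-k})$. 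Combining these two bounds gives $|\pi(B_{\Z^n}^S(r))|=\Theta(r^{n-k})=\Theta(r^{n-\rank(\Id-\varphi)})$.

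The only genuine subtlety I anticipate is the reduction to a convenient generating set: one must verify carefully that $f_R^S(r)$ is indeed robust under changes of $S$ up to a multiplicative rescaling of $r$, which in turn preserves the $\Theta$-asymptotics of a polynomial. The rest of the argument is elementary linear algebra over $\Z$ and lattice point counting.
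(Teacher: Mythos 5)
Your proposal is correct and follows essentially the same route as the paper: identify the $\varphi$-twisted conjugacy classes with the cosets of $\Image(\Id_{\Z^n}-\varphi)$, reduce to a convenient generating set via bi-Lipschitz equivalence of word metrics, pass to a Smith-normal-form-adapted basis, and count cosets meeting the $\ell^1$-ball. The only cosmetic difference is that the paper constructs an exact set of minimal coset representatives inside the ball, whereas you sandwich the count between balls of radius $r-C$ and $r$; both yield the same $\Theta(r^{n-\rank(\Id_{\Z^n}-\varphi)})$ asymptotics.
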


Next, in Section \ref{sec:tc growth virt ab} we use Theorem \ref{thmL:twisted conjugacy growth Zn} to describe the twisted conjugacy growth function with respect to an endomorphism of any finitely generated virtually abelian group. Ciobanu, Hermiller, Holt and Rees studied in 2016 (\cite[section 5]{chhr16}) the conjugacy growth of these groups.

\begin{theoremLetter}\label{thmL:twisted conjugacy growth virtually abelian}
	Let $S$ be any finite generating set of a finitely generated virtually abelian group $G$ and $N\lhd_{\text{fin}}G$ a torsion-free abelian subgroup of $G$. Let $\varphi\in \End(G)$ with $\varphi(N)\subset N$, then
	\[ f_R^S(r)=\Theta\left(r^{\rank(N)-\min\limits_{gN\in G/N}\rank(\Id_N-\tau_g\circ\varphi\vert_N)}\right)\quad \text{and}\quad \#B_G^S(r)=\Theta\left(r^{\rank(N)}\right)\text{ as } r\to \infty \]
	where $\tau_g:G\to G:h\mapsto ghg^{-1}$ denotes conjugation by $g\in G$.
\end{theoremLetter}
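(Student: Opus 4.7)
The ball-growth estimate $\#B_G^S(r)=\Theta(r^{\rank(N)})$ is a classical consequence of the fact that any virtually abelian group has polynomial growth of degree equal to the rank of a finite-index abelian subgroup (Bass--Guivarc'h), so I would cite this and focus on $f_R^S(r)$. Since $\varphi(N)\subset N$, the endomorphism $\varphi$ descends to $\bar\varphi\in\End(G/N)$, and every $\varphi$-twisted conjugacy class $C$ of $G$ projects under the quotient map $\pi\colon G\to G/N$ onto a single $\bar\varphi$-twisted conjugacy class of the finite group $G/N$. Since $G/N$ carries only finitely many such classes $\bar C_1,\dots,\bar C_m$, I can decompose $f_R^S(r)=\sum_{j=1}^m f_j(r)$, where $f_j(r)$ counts those $\varphi$-classes of $G$ projecting to $\bar C_j$ and meeting $B_G^S(r)$.

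Fix $j$ and a lift $h=h_j\in G$ of a representative of $\bar C_j$. A direct computation using that $N$ is normal and abelian with $\varphi(N)\subset N$ gives, for $z\in N$ and $hn\in hN$,
\[
z(hn)\varphi(z)^{-1}=h\bigl(n+(\tau_{h^{-1}}|_N-\varphi|_N)(z)\bigr),
\]
so under the bijection $hN\leftrightarrow N$, $hn\leftrightarrow n$, the $N$-orbits of $\varphi$-twisted conjugation on $hN$ are precisely the cosets of the subgroup $M_h:=(\tau_{h^{-1}}|_N-\varphi|_N)(N)$ in $N$. The factorisation $\tau_{h^{-1}}|_N-\varphi|_N=\tau_{h^{-1}}|_N\circ(\Id_N-\tau_h\circ\varphi|_N)$ together with the fact that $\tau_{h^{-1}}|_N\in\Aut(N)$ yields $\rank(M_h)=\rank(\Id_N-\tau_h\circ\varphi|_N)$. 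The full $\varphi$-classes of $G$ restricted to $hN$ are orbits of the finite-index subgroup $H_j:=\pi^{-1}(\Stab_{\bar\varphi}(\bar h))\leq G$, which permutes the $N$-orbits; each such orbit is a union of at most $[H_j:N]\leq[G:N]$ of them. Since $\pi(C)=\bar C_j$ is onto, every $\varphi$-class of type $j$ meets $h_jN$, so $f_j(r)$ agrees up to a bounded multiplicative constant with the number of $N$-orbits in $h_jN$ containing at least one element of $B_G^S(r)$.

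Combining the standard quasi-isometry between the word metric on $G$ restricted to $h_jN$ and the word metric on $N$ with the geometric estimate underlying Theorem \ref{thmL:twisted conjugacy growth Zn} (counting cosets of a rank-$k$ sublattice of $\Z^n$ meeting a ball of radius $r$, which gives $\Theta(r^{n-k})$), I obtain
\[
f_j(r)=\Theta\bigl(r^{\rank(N)-\rank(\Id_N-\tau_{h_j}\circ\varphi|_N)}\bigr).
\]
A short conjugation calculation shows that $\rank(\Id_N-\tau_g\circ\varphi|_N)$ depends only on the $\bar\varphi$-class of $\bar g\in G/N$ (the operators for two $\bar\varphi$-conjugate cosets differ by conjugation by an automorphism of $N$), so the $h_j$ exhaust the possible exponents; summing $f_R^S(r)=\sum_j f_j(r)$ the largest exponent dominates and delivers the claimed $\Theta\bigl(r^{\rank(N)-\min_{gN}\rank(\Id_N-\tau_g\circ\varphi|_N)}\bigr)$. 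The main technical obstacle will be ensuring that the coset-wise passage between the $G$-metric and the $N$-metric, the reduction from $H_j$-orbits to $N$-orbits, and the application of the sublattice-coset count from Theorem \ref{thmL:twisted conjugacy growth Zn} are uniform across the finitely many cosets, so that the multiplicative constants combine cleanly into a single $\Theta$.
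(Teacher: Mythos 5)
Your argument is correct in outline, but it is organised differently from the paper's proof, and the difference is worth spelling out. The paper decomposes $f_R^S$ by the cosets $aN$ themselves (counting, for each representative $a$, the classes that meet $B_G^S(r)\cap aN$; these counts overlap, so one only gets a max/sum sandwich), and for each coset it describes the intersection of a $\varphi$-class with $aN$ explicitly: it is a finite union $\bigcup_{c\in E^a}\bigl(\tau_c(y)-(\tau_a\circ\varphi^a)(c)+\Image(\Id_N-\tau_a\circ\varphi\vert_N)\bigr)$, where $E^a$ is exactly your $\Stab_{\olvarphi}$-set. Counting cosets whose associated finite union of affinely transformed copies meets a ball requires an upgraded version of the $\Z^n$-coset lemma (the paper's Lemma \ref{lem:layer A}), whose hypothesis forces a verification that each $\tau_c\vert_N$ commutes with $\Id_N-\tau_a\circ\varphi\vert_N$. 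You instead decompose by the finitely many $\olvarphi$-twisted classes of $G/N$ (a genuine partition), pick one coset $h_jN$ per class, count only the $N$-orbits there --- which are single cosets of $\Image(\tau_{h_j^{-1}}\vert_N-\varphi\vert_N)$, so the basic coset-count behind Theorem \ref{thmL:twisted conjugacy growth Zn} suffices --- and absorb the passage from $N$-orbits to full classes into the bounded index $[H_j:N]$. This is a real simplification: it bypasses Lemma \ref{lem:layer A} and the commutation check entirely. The price is the step you flag yourself: a class of type $j$ may meet $B_G^S(r)$ only in a coset $h'N\neq h_jN$ of the same $\olvarphi$-class, and transporting that ball-point into $h_jN$ by a fixed twisted conjugator inflates the radius to $r+\text{const}$; so strictly your upper bound is by the $N$-orbits of $h_jN$ meeting $B_G^S(r+\text{const})$, not $B_G^S(r)$. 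Since the orbit count is polynomial this costs only a multiplicative constant, but you must either establish the $\Theta(r^{\rank(N)-\rank(M_{h_j})})$ asymptotics of the orbit count first and then absorb the shift, or run the comparison through the quasi-equivalence formalism (the paper's Lemmas \ref{lem:asymptotics growth independent} and \ref{lem:V(r) independent of generating set}). Your rank identity $\rank(\tau_{h^{-1}}\vert_N-\varphi\vert_N)=\rank(\Id_N-\tau_h\circ\varphi\vert_N)$ and the invariance of this rank along a $\olvarphi$-class are both correct, so the exponents do match the stated minimum over $gN\in G/N$.
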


To the authors' knowledge, Theorem \ref{thmL:twisted conjugacy growth virtually abelian} gives the first general explicit description of the twisted conjugacy growth function for a family of groups. However, it was defined earlier and has for example been used by Evetts (\cite{evet23}) to determine the conjugacy growth of higher Heisenberg groups. Moreover, Evetts proved earlier that the \textit{conjugacy growth series} of any finitely generated virtually abelian group is rational (\cite{evet19}). Benson showed this in 1983 for the \textit{standard growth series} (\cite{bens83}) of these groups.

Next, in Section \ref{sec:tc class growth} we focus on the \textit{twisted conjugacy class growth} $\beta_{[g_0]_{\varphi}\subset G}^S$ of one fixed $\varphi$-twisted conjugacy class $[g_0]_{\varphi}$. This function counts the number of elements in $[g_0]_{\varphi}$ that belong to the sets $B_G^S(r)$. For any subset $U\subset G$, the function $\beta_{U\subset G}^S$ is also known as the \textit{subset growth} of $U$ in $G$.

\begin{theoremLetter}\label{thmL:twisted conjugacy class growth virtually abelian}
	Let $S$ be any finite generating set of a finitely generated virtually abelian group $G$ and $N\lhd_{\text{fin}}G$ a torsion-free abelian subgroup of $G$. Let $\varphi\in \End(G)$ with $\varphi(N)\subset N$ and $[g_0]_{\varphi}$ a fixed $\varphi$-twisted conjugacy class of $G$, then
	\[\beta_{[g_0]_{\varphi}\subset G}^S(r)=\Theta(r^k) \text{ as }r\to \infty\quad \text{where}\]
        \[k=\max\{\rank(\Id_N-\tau_g\circ\varphi\vert_N)\:\vert\: gN\in G/N \text{ and } [g_0]_{\varphi}\cap p^{-1}(gN)\neq \emptyset\}\]
    where $p:G\to G/N$ denotes the projection of $G$ onto $G/N$.
\end{theoremLetter}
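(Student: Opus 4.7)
The plan is to partition $[g_0]_\varphi$ according to the cosets of $N$ in $G$ and to analyse each piece separately. Let $\bar\varphi$ denote the endomorphism of $G/N$ induced by $\varphi$ (well defined since $\varphi(N)\subset N$). Since $(tn) g_0 \varphi(tn)^{-1}$ projects under $p$ to $p(t)\bar g_0 \bar\varphi(p(t))^{-1}$ for every $n \in N$, the cosets $\bar c \in G/N$ meeting $[g_0]_\varphi$ are exactly those in the (finite) $\bar\varphi$-twisted conjugacy class of $\bar g_0 := p(g_0)$. It therefore suffices to prove the asymptotic on each of the finitely many pieces $[g_0]_\varphi \cap p^{-1}(\bar c)$ and sum.

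Fix such a coset $\bar c$ and let $t$ range over lifts of the finitely many elements $\bar t \in G/N$ with $\bar t \bar g_0 \bar\varphi(\bar t)^{-1} = \bar c$. For each such lift, the contribution from $z \in tN$ equals
\[ \{ (tn) g_0 \varphi(tn)^{-1} \mid n \in N \} \;=\; g_t \cdot \mu_t(N), \qquad g_t := t g_0 \varphi(t)^{-1}, \]
where, using normality and abelianness of $N$, a direct expansion yields (in additive notation on $N$)
\[ \mu_t \;=\; \tau_{g_t^{-1} t}|_N \;-\; \tau_{\varphi(t)} \circ \varphi|_N \;\in\; \End(N). \]
The key identity $t^{-1} g_t \varphi(t) = g_0$ then permits the factorisation $\mu_t = \tau_{g_t^{-1} t}|_N \circ (\Id_N - \tau_{g_0} \circ \varphi|_N)$; since $\tau_{g_t^{-1} t}|_N$ is an automorphism of $N$, the image $\mu_t(N) \leq N$ is a subgroup of rank $\rank(\Id_N - \tau_{g_0} \circ \varphi|_N)$. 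A parallel manipulation, based on the identity $\tau_{\varphi(z)^{-1}} \circ \varphi|_N = \varphi \circ \tau_{z^{-1}}|_N$, shows that for any $g' = z g_0 \varphi(z)^{-1}$ in the class the endomorphism $\Id_N - \tau_{g'} \circ \varphi|_N$ is conjugate in $\End(N)$ (via $\tau_z|_N$) to $\Id_N - \tau_{g_0} \circ \varphi|_N$, so their ranks coincide; this common value is exactly the maximum $k$ in the statement.

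Consequently $[g_0]_\varphi$ is a finite disjoint union of cosets $g_t \mu_t(N)$ of rank-$k$ subgroups of $N \leq G$. By the standard fact that $N \cong \Z^{\rank(N)}$ is quasi-isometrically embedded in $G$ with respect to the word metric from $S$ (also used in the proof of Theorem \ref{thmL:twisted conjugacy growth virtually abelian}), the number of lattice points of a rank-$k$ subgroup of $N$ inside $B_G^S(r)$ is $\Theta(r^k)$. Left-invariance of the word metric and the sandwich
\[ |H \cap B_G^S(r - d_S(1,g_t))| \;\leq\; |g_t H \cap B_G^S(r)| \;\leq\; |H \cap B_G^S(r + d_S(1,g_t))| \]
promote this to $|g_t \mu_t(N) \cap B_G^S(r)| = \Theta(r^k)$. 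Summing over the finitely many pieces yields both the $O(r^k)$ upper and the $\Omega(r^k)$ lower bound, giving $\beta_{[g_0]_\varphi \subset G}^S(r) = \Theta(r^k)$.

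The main obstacle is the algebraic bookkeeping that produces the identity $\mu_t = \tau_{g_t^{-1} t}|_N \circ (\Id_N - \tau_{g_0} \circ \varphi|_N)$ and the verification that the rank is independent of which contributing coset (or which representative within it) is chosen. Once these structural facts are in place, the lattice-point count reduces to the same Euclidean estimate that underlies Theorems \ref{thmL:twisted conjugacy growth Zn} and \ref{thmL:twisted conjugacy growth virtually abelian}.
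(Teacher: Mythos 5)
Your proof is correct, and while the global strategy matches the paper's --- decompose $[g_0]_{\varphi}$ into finitely many cosets of sublattices of $N$ and count lattice points in the balls $B_G^S(r)$ --- your decomposition is organised differently and actually yields more. The paper slices the class by its \emph{target} coset: for each $aN$ meeting $[g_0]_{\varphi}$ it invokes Lemma \ref{lem:virt ab finite union} to write $[g_0]_{\varphi}\cap p^{-1}(aN)$ as a finite union of cosets of $\Image(\Id_N-\tau_a\circ\varphi\vert_N)$ and then outsources the counting to Lemmas 2.4, 2.5 and Proposition 2.6 of \cite{de23}, so the degree appears as a maximum over the cosets met. You slice instead by the coset $tN$ of the \emph{conjugator}, getting a single coset $g_t\,\mu_t(N)$ per slice; your factorisation is correct, since $g_t^{-1}t=\varphi(t)g_0^{-1}$ and a direct expansion gives $\mu_t=\tau_{\varphi(t)}\vert_N\circ\tau_{g_0^{-1}}\vert_N\circ(\Id_N-\tau_{g_0}\circ\varphi\vert_N)$, an automorphism composed with $\Id_N-\tau_{g_0}\circ\varphi\vert_N$. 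Your companion identity $\tau_{g'}\circ\varphi\vert_N=\tau_z\vert_N\circ(\tau_{g_0}\circ\varphi\vert_N)\circ(\tau_z\vert_N)^{-1}$ for $g'=zg_0\varphi(z)^{-1}$ is also valid, and it shows that \emph{every} rank occurring in the maximum defining $k$ equals $\rank(\Id_N-\tau_{g_0}\circ\varphi\vert_N)$. So your argument not only proves the theorem but shows the maximum is degenerate: the clean formula of Theorem \ref{thmL:examples quotient} holds without the hypotheses that $G/N$ be abelian and $\olvarphi=\Id_{G/N}$, a genuine sharpening of the paper's statement. Two minor points to tighten: the union over the chosen lifts $t$ (one per element of $G/N$) need not be disjoint, though this is harmless for the two-sided bounds; and the lattice-point estimate needs, besides the quasi-isometric embedding of $N$ in $G$, the standard fact that the rank-$k$ subgroup $\mu_t(N)$ is undistorted in $N$ --- this is exactly what the paper borrows from \cite[Lemma 2.4]{de23}.
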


Theorem \ref{thmL:twisted conjugacy class growth virtually abelian} is based on a recent paper by Dermenjian and Evetts (\cite{de23}) in which they argue that the \textit{conjugacy class growth} of a finitely generated virtually abelian group is polynomial.

In Section \ref{sec:quotient growth}, we study the number of induced twisted conjugacy classes in certain finite quotients of $G$. More precisely, if $\Z^n\subset_{\text{fin}} G$, then we consider the finite quotients $G/(k\Z)^n$. Any endomorphism $\varphi\in \End(G)$ with $\varphi(\Z^n)\subset \Z^n$ induces morphisms $\olvarphi_k\in \End(G/(k\Z)^n)$. The \textit{quotient growth} $f_Q$ is defined as the number of $\olvarphi_k$-twisted conjugacy classes of $G/(k\Z)^n$. It turns out that for finitely generated virtually abelian groups, the quotient growth is asymptotically the same as the twisted conjugacy growth.

\begin{theoremLetter}\label{thmL:quotient growth virtually abelian}
	Let $S$ be any finite generating set of a finitely generated virtually abelian group $G$ and $N\lhd_{\text{fin}}G$ a torsion-free abelian subgroup of $G$. Let $\varphi\in \End(G)$ with $\varphi(N)\subset N$, then
	\[f_Q(k)=\Theta(f_R^S(k))=\Theta\left(k^{\rank(N)-\min\limits_{gN\in G/N}\rank(\Id_N-\tau_g\circ\varphi\vert_N)}\right) \text{ as }k\to \infty.\]
\end{theoremLetter}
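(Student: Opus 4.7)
Since Theorem \ref{thmL:twisted conjugacy growth virtually abelian} already supplies $f_R^S(k) = \Theta(k^{n-s})$ with $n := \rank(N)$ and $s := \min_{gN \in G/N} \rank(\Id_N - \tau_g \circ \varphi|_N)$, it suffices to establish $f_Q(k) = \Theta(k^{n-s})$ independently. Identifying $N$ with $\Z^n$, I would first note that $kN$ is characteristic in $N$ and therefore normal in $G$, so $G/kN$ is a finite group with normal subgroup $N/kN \cong (\Z/k\Z)^n$ and quotient $G/N$, and $\olvarphi_k \in \End(G/kN)$ is well-defined.

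For the upper bound, every element of $G/kN$ lifts to some element of $G$ of word length $O(k)$: combine bounded-length coset representatives of $G/N$ with representatives of $N/kN$ whose coefficients lie in $\{0, \ldots, k-1\}$ with respect to a fixed $\Z$-basis of $N$. Since the canonical projection $\pi: G \to G/kN$ sends $\varphi$-twisted conjugacy classes onto $\olvarphi_k$-twisted conjugacy classes, every $\olvarphi_k$-class is the image of some $\varphi$-class meeting $B_G^S(Ck)$ for an absolute constant $C$. This gives $f_Q(k) \leq f_R^S(Ck) = O(k^{n-s})$.

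For the lower bound I would fix $g_* \in G$ attaining $s = \rank(\Id_N - \tau_{g_*} \circ \varphi|_N)$ and focus on the $\olvarphi_k$-classes meeting the coset $P := \overline{g_*}(N/kN) \subset G/kN$. Writing $N$ additively, a short computation shows that twisted conjugation by $\overline h \in N/kN$ sends $\overline{g_*} \overline m \in P$ to $\overline{g_*}(\overline m + \overline A(\overline h))$, where $A := \tau_{g_*^{-1}}|_N - \varphi|_N = \tau_{g_*^{-1}}|_N \circ (\Id_N - \tau_{g_*} \circ \varphi|_N)$ has rank $s$ (since $\tau_{g_*^{-1}}|_N$ is an automorphism of $N$). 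A Smith normal form calculation (analogous to the one underlying Theorem \ref{thmL:twisted conjugacy growth Zn}) then yields $|(N/kN)/\overline A(N/kN)| \geq k^{n-s}$, so $P$ contains at least $k^{n-s}$ distinct $N/kN$-twisted orbits.

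The main obstacle is controlling the merging of these orbits under the full $\olvarphi_k$-action. The key observation is that twisted conjugation by $\overline h \in G/kN$ with $hN$ outside $\Stab_{\varphi_N}(g_*N) \subset G/N$ (where $\varphi_N \in \End(G/N)$ is induced by $\varphi$) sends $P$ into a different coset of $N/kN$, so the $\olvarphi_k$-classes restricted to $P$ coincide with the orbits of $H/kN$, where $H := p^{-1}(\Stab_{\varphi_N}(g_*N))$ and $p: G \to G/N$ is the canonical projection. Since $N/kN$ acts trivially on the set of its own orbits in $P$, the induced action of $H/kN$ on this set factors through the finite group $H/N$ of order at most $[G:N]$, independent of $k$. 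Hence at most $[G:N]$ many $N/kN$-orbits merge into any single $\olvarphi_k$-class meeting $P$, yielding $f_Q(k) \geq k^{n-s}/[G:N] = \Omega(k^{n-s})$ and completing the proof.
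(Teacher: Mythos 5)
Your argument is correct, but it differs from the paper's in both halves, so a comparison is worthwhile. For the upper bound you exploit that the projection $G\to G/(k\Z)^n$ carries $\varphi$-classes exactly onto $\olvarphi_k$-classes and that every element of the finite quotient lifts into $B_G^S(Ck)$, giving $f_Q(k)\leq f_R^S(Ck)$ and thus importing Theorem \ref{thmL:twisted conjugacy growth virtually abelian} as a black box; the paper instead never compares $f_Q$ with $f_R^S$ directly but proves an independent two-sided estimate $\#\olFa(k)=\Theta\bigl(k^{n-\rank(\Id_{\Z^n}-\tau_a\circ\varphi\vert_{\Z^n})}\bigr)$ for \emph{every} coset $a\Z^n$ (Theorem \ref{thm:quotient growth}), which yields slightly more information (per-coset asymptotics) at the cost of needing the full index computation $[\Z^n:H+(k\Z)^n]=\Theta(k^{n-\rank H})$ of Theorem \ref{thm:index subgroup Z^n}, proved via the isolator-subgroup induction of Lemma \ref{lem:order v_r}. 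For the lower bound your structure is essentially the paper's restricted to one optimal coset: your identification of the $N/kN$-orbits in $P$ with cosets of $\Image(A)+kN$ is the $c=1_G$ part of Lemma \ref{lem:virt ab finite union quotient}, and your observation that the merging is controlled by the finite group $H/N$ is exactly the paper's division by $\#E^a$ (note $\#E^a=[H:N]$, so your constant $[G:N]$ is the same bound up to the index $[G:H]$); your shortcut is that only the one-sided inequality $[N:\Image(A)+kN]\geq k^{n-s}$ is needed, which follows from Smith normal form in one line ($\prod_{i\leq s}\gcd(d_i,k)\cdot k^{n-s}\geq k^{n-s}$) and lets you bypass Lemma \ref{lem:order v_r} and Theorem \ref{thm:index subgroup Z^n} entirely. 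All the individual claims check out: the image of a twisted class under $\pi$ is a full $\olvarphi_k$-class since $\olvarphi_k(\bar z)=\overline{\varphi(z)}$; twisted conjugation is a genuine left action, so the $H/kN$-action on the set of $N/kN$-orbits does factor through $H/N$; and $\rank(A)=s$ because $\tau_{g_*^{-1}}\vert_N$ is invertible.
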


    We conclude with Section \ref{sec:examples}, in which we give some concrete examples and final remarks. Moreover, we describe a family of groups for which the degree in Theorem \ref{thmL:twisted conjugacy class growth virtually abelian} becomes cleaner.
\begin{theoremLetter}\label{thmL:examples quotient}
	Let $S$ be any finite generating set of a finitely generated virtually abelian group $G$ and $N\lhd_{\text{fin}}G$ a torsion-free abelian subgroup of $G$. Let $\varphi\in \End(G)$ with $\varphi(N)\subset N$. If $G/N$ is abelian and the induced morphism $\olvarphi\in \End(G/N)$ is the identity on $G/N$, then for any $g_0\in G$ it holds that
	\[\beta_{[g_0]_{\varphi}\subset G}^S(r)=\Theta(r^{\rank(\Id_N-\tau_{g_0}\circ\varphi\vert_N)}) \text{ as }r\to \infty.\]
\end{theoremLetter}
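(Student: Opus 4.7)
The plan is to deduce this directly from Theorem \ref{thmL:twisted conjugacy class growth virtually abelian} by showing that, under the two extra hypotheses on $G/N$ and $\olvarphi$, the maximum
\[ k=\max\{\rank(\Id_N-\tau_g\circ\varphi\vert_N)\:\vert\: gN\in G/N,\ [g_0]_{\varphi}\cap p^{-1}(gN)\neq \emptyset\} \]
collapses to the single value $\rank(\Id_N-\tau_{g_0}\circ\varphi\vert_N)$. Once this is established, the result is immediate.

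First I would determine which cosets $gN \in G/N$ can intersect $[g_0]_\varphi$. Any element of $[g_0]_\varphi$ is of the form $zg_0\varphi(z)^{-1}$ for some $z \in G$. Projecting to $G/N$ and using that $\olvarphi=\Id_{G/N}$, we get $p(zg_0\varphi(z)^{-1}) = p(z)\,p(g_0)\,p(z)^{-1}$, which equals $p(g_0)$ because $G/N$ is abelian. Hence $[g_0]_\varphi \subset p^{-1}(g_0N)$, and the maximum defining $k$ is effectively taken over the single coset $g_0N$.

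Next I would check that the exponent $\rank(\Id_N-\tau_g\circ\varphi\vert_N)$ does not depend on the representative $g \in g_0N$. If $g = g_0 n$ with $n \in N$, then for any $h \in N$ we have $\tau_g(h)=g_0 n h n^{-1} g_0^{-1}=g_0 h g_0^{-1}=\tau_{g_0}(h)$, using that $N$ is abelian so that $n$ acts trivially on $N$ by conjugation. Hence $\tau_g\vert_N = \tau_{g_0}\vert_N$ for every $g$ in the coset $g_0N$, and combined with the previous paragraph this gives $k = \rank(\Id_N-\tau_{g_0}\circ\varphi\vert_N)$, so Theorem \ref{thmL:twisted conjugacy class growth virtually abelian} produces the stated asymptotic.

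No step in this argument is deep: the two hypotheses are tailored to make the maximum in Theorem \ref{thmL:twisted conjugacy class growth virtually abelian} trivial. The only place that requires a little care is verifying that the rank is a coset invariant on $g_0N$, which is precisely where the hypothesis that $N$ is abelian enters, and the only place using that $\olvarphi$ is the identity on the abelian quotient $G/N$ is the identification of the unique coset meeting $[g_0]_\varphi$.
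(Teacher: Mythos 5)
Your proposal is correct and follows essentially the same route as the paper: the paper's proof also reduces to Theorem \ref{thmL:twisted conjugacy class growth virtually abelian} after observing (via its Lemma on the abelian-quotient case) that $[g_0]_{\varphi}$ meets only the coset $g_0N$, using exactly your projection argument with $\olvarphi=\Id_{G/N}$ and commutativity of $G/N$. Your additional check that $\rank(\Id_N-\tau_g\circ\varphi\vert_N)$ is constant on the coset $g_0N$ is a sound (and implicitly needed) verification that the paper leaves tacit.
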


%%%%%%%%%%%%%%%%%%%%%%%%%%%%%%%%%%%%%%%%%%%%%%%%%%%%%%%%%%%%%%%%%%%%%%%%%%%%%%%%%%%%%%%%%%%%%%%%%%%%%%%%%%%%%%%%%%%%%%%%%%%%%%%%%
%%%%%%%%%%%%%%%%%%%%%%%%%%%%%%%%%%%%%%%%%%%%%%%%%%%%%Twisted conjugacy growth%%%%%%%%%%%%%%%%%%%%%%%%%%%%%%%%%%%%%%%%%%%%%%%%%%%%
%%%%%%%%%%%%%%%%%%%%%%%%%%%%%%%%%%%%%%%%%%%%%%%%%%%%%%%%%%%%%%%%%%%%%%%%%%%%%%%%%%%%%%%%%%%%%%%%%%%%%%%%%%%%%%%%%%%%%%%%%%%%%%%%%
\section{Twisted conjugacy growth}
%%%%%%%%%%%%%%%%%%%%%%%%%%%%%%%%%%%%%%%%%%%%%%%%%%%%%%%%%%%%%%%%%%%%%%%%%%%
%%%%%%%%%%%%%%%%%%%%%%%%%%%%%%Definitions%%%%%%%%%%%%%%%%%%%%%%%%%%%%%%%%%%
%%%%%%%%%%%%%%%%%%%%%%%%%%%%%%%%%%%%%%%%%%%%%%%%%%%%%%%%%%%%%%%%%%%%%%%%%%%
\subsection{Definitions}
For a group endomorphism $\varphi\in \End(G)$ we say that two group elements $g,h\in G$ are \textit{$\varphi$-twisted conjugate} if there exists some $z\in G$ such that $g=zh\varphi(z)^{-1}$ and we denote it with $g\sim_{\varphi}h$. The relation $\sim_{\varphi}$ is an equivalence relation on $G$ of which the equivalence classes are called the \textit{$\varphi$-twisted conjugacy} classes of $G$. We denote the $\varphi$-twisted conjugacy class of $G$ containing $g\in G$ with $[g]_{\varphi}$. The number of $\varphi$-twisted conjugacy classes of $G$ is called the \textit{Reidemeister number $R(\varphi)$} of $\varphi$. The \textit{Reidemeister spectrum $\Spec(G)$} of $G$ contains all Reidemeister numbers of automorphisms of $G$.

We want to study the spread of the twisted conjugacy classes in the \textit{Cayley graph} of $G$. For completeness, we include the relevant definitions with their notation.

\begin{definition}\label{def:word metric}
	Let $G$ be finitely generated by $S$.
	\begin{itemize}
		\item The \emph{Cayley graph $\Cay(S,G)$} is the graph with vertex set $G$ and edge set
		\[ \{\{g,gs\}\:\vert\: g\in G,\: s\in (S\cup S^{-1})\setminus \{1_G\}\} \]
            where $1_G$ denotes the identity element of $G$.
		\item 	The \emph{word metric $d_S$} on $G$ with respect to $S$ is defined for all $g,h\in G$ by
		\[ d_S(g,h):=\min\{k\in \N\:\vert\: \exists s_1,\dots,s_k\in S\cup S^{-1}:\: g^{-1}h=s_1\dots s_k\}. \]
		\item The \emph{word length $\norm{g}_S$} of $g\in G$ with respect to $S$ equals $d_S(g,e)$.
		\item The \emph{closed ball $B_G^S(r)$} of radius $r\in \N$ is defined by
		\[  B_G^S(r):=\{g\in G\:\vert\: \norm{g}_S\leq r\}=\{s_1\dots s_k \:\vert \: k\in \{0,1,\dots,r\}\text{ and } s_1,\dots,s_k\in S\cup S^{-1}\}. \]
            \item The \emph{word growth $\beta_G^S(r)$} denotes the number of elements in $B_G^S(r)$, i.e. $\beta_G^S(r):=\# B_G^S(r)$.
	\end{itemize}
\end{definition}

The \textit{twisted conjugacy growth function} $f_R^S(r)$ counts the number of different twisted conjugacy classes that intersect the closed ball $B_G^S(r)$. Formally we define it in the following way.

\begin{definition}\label{def:twisted conjugacy growth}
	Let $G$ be finitely generated by $S$ and $\varphi\in \End(G)$.
	\begin{itemize}
		\item The \emph{twisted conjugacy growth function $f_R^S$} of $\varphi$ with respect to $S$ is defined by 
		\[ f_R^S:\N\to \N: r\mapsto \# \{[g]_{\varphi} \:\vert\: g\in B_G^S(r)\}.\]
		\item The \emph{twisted conjugacy growth} of $\varphi$ with respect to $S$ is the following limit (if it exists)
		\[ \lim_{r\to \infty} \frac{\log(f_R^S(r))}{\log(\beta_G^S(r))}.\]
	\end{itemize}
\end{definition}

\begin{remark}
	We extend the functions $f_R^S$ and $B_G^S$ to $\R_{\geq 0}$ by taking the floor of any positive real number, so $f_R^S(r)=f_R^S(\lfloor r\rfloor)$ and $B_G^S(r)=B_G^S(\lfloor r\rfloor)$ for all $r\in \R_{\geq 0}$.
\end{remark}
%%%%%%%%%%%%%%%%%%%%%%%%%%%%%%%%%%%%%%%%%%%%%%%%%%%%%%%%%%%%%%%%%%%%%%%%%%%
%%%%%%%%%%%%%%%%%%%%%Comparing asymptotic growth%%%%%%%%%%%%%%%%%%%%%%%%%%%
%%%%%%%%%%%%%%%%%%%%%%%%%%%%%%%%%%%%%%%%%%%%%%%%%%%%%%%%%%%%%%%%%%%%%%%%%%%
\subsection{Comparing asymptotic growth}
In this section, we describe two different ways to compare functions asymptotically. We provide some relations between them and argue that they are interchangeable when considering polynomial growth. Next, we apply these results to prove that $B_G^S$ and $f_R^S$ grow independent of the generating set. We start by giving the definitions.
\begin{definition}\label{def:quasi-equivalent}
	Let $f,g:\R_{\geq 0}\to \R_{\geq 0}$ be two increasing functions. We say that $g$ \emph{quasi-dominates} $f$, denoted by $f\prec g$, if there exists $b,c\in \R_{>0}$ such that for all $r\in \R_{\geq 0}$ it holds that
	\[ f(r)\leq cg(cr+b)+b. \]
	The functions $f$ and $g$ are called \emph{quasi-equivalent}, denoted by $f\sim g$, if both $f\prec g$ and $g\prec f$.
\end{definition}

\begin{definition}\label{def:big Theta}
	Let $f,g:\R_{\geq 0}\to \R_{\geq 0}$ be two increasing functions. Then $f(r)=\Theta(g(r))$ as $r\to\infty$, if there are $c_1,c_2>0$ and $n_0\in\N_{>0}$ such that for all $r\in \R_{\geq n_0}$ it holds that
	\[ c_1g(r)\leq f(r)\leq c_2g(r). \]
\end{definition}

It turns out that if two functions can be compared using $\Theta$, then they are quasi-equivalent. The converse also holds if the functions grow polynomially.

\begin{lemma}\label{lem:asymptotics growth independent}
	Let $f,g:\R_{\geq 0}\to \R_{\geq 0}$ be two increasing functions. If $f(r)=\Theta(g(r))$ as $r\to \infty$, then $f\sim g$.\\
    Moreover, if there is some $k\in \N$ such that $f\sim (r\mapsto r^k)$, then $f(r)=\Theta(r^k)$ if $r\to \infty$.
\end{lemma}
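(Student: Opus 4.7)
The plan is to prove the two implications separately, both by directly unpacking the definitions and carefully managing the additive and multiplicative constants that absorb small-$r$ behaviour.

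For the first implication, assume $f(r)=\Theta(g(r))$, so there exist $c_1,c_2>0$ and $n_0\in\N_{>0}$ with $c_1 g(r)\le f(r)\le c_2 g(r)$ for all $r\ge n_0$. I would prove $f\prec g$; the other direction $g\prec f$ is symmetric using $1/c_1$ in place of $c_2$. I choose $c:=\max(c_2,1)$ and $b:=\max(n_0,f(n_0))$. For $r\ge n_0$, monotonicity of $g$ and $c\ge 1$, $b\ge 0$ give $g(cr+b)\ge g(r)$, so $f(r)\le c_2 g(r)\le c\,g(cr+b)+b$. For $r<n_0$, monotonicity of $f$ yields $f(r)\le f(n_0)\le b\le c\,g(cr+b)+b$. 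This is the only subtlety: the $\Theta$-inequality only starts at $n_0$, and one has to make sure the $b$ in the quasi-equivalence definition is large enough to cover the behaviour of $f$ on $[0,n_0)$.

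For the second implication, unpacking $f\sim(r\mapsto r^k)$ yields constants $b_1,c_1,b_2,c_2>0$ with
\[
f(r)\le c_1(c_1 r+b_1)^k+b_1\quad\text{and}\quad r^k\le c_2\,f(c_2 r+b_2)+b_2
\qquad\text{for all }r\ge 0.
\]
For the upper bound of $\Theta$, I bound $(c_1 r+b_1)^k\le (c_1+b_1)^k r^k$ whenever $r\ge 1$, which combined with $b_1\le b_1 r^k$ for $r\ge 1$ gives $f(r)\le C_1 r^k$ for a suitable $C_1>0$ and all $r\ge 1$. For the lower bound, I substitute $s:=c_2 r+b_2$, so $r=(s-b_2)/c_2$, and the second inequality becomes
\[
f(s)\ge \frac{1}{c_2}\!\left(\frac{s-b_2}{c_2}\right)^{\!k}-\frac{b_2}{c_2}\qquad\text{for all }s\ge b_2.
\]
Choosing $s$ large enough (say $s\ge 2b_2$, so $s-b_2\ge s/2$, and $s$ large enough that the subtracted constant $b_2/c_2$ is dominated by half of the leading term) gives $f(s)\ge C_2 s^k$ for some $C_2>0$ and all sufficiently large $s$. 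Combining the two bounds yields $f(r)=\Theta(r^k)$.

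The main obstacle is purely bookkeeping in the second part: the affine-then-power expression $(c_1 r+b_1)^k$ and the additive slack $+b_1,+b_2$ in the definition of $\prec$ both have to be absorbed into a clean multiple of $r^k$, which is only possible once $r$ is large enough. This is precisely why the conclusion is phrased as $\Theta$ (valid past a threshold $n_0$) rather than a global inequality. No delicate inequality is needed beyond monotonicity of $g$ and elementary manipulations with the binomial bound $(c_1 r+b_1)^k\le (c_1+b_1)^k r^k$ on $r\ge 1$.
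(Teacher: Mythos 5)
Your proof is correct and follows essentially the same route as the paper's: unpack both definitions and manage the affine constants, handling small $r$ in the first part (you do this by a case split with $b:=\max(n_0,f(n_0))$ where the paper shifts the argument via $f(r)\le f(r+n_0)$) and absorbing the expressions $c(cr+b)^k+b$ and $\frac{1}{c'}\bigl(\frac{r-b'}{c'}\bigr)^k-\frac{b'}{c'}$ into clean multiples of $r^k$ for large $r$ in the second part. The only caveat, shared with the paper's own argument, is that the lower bound in the second part genuinely needs $k\ge 1$ (or an a priori positive lower bound on $f$) for the subtracted constant to be dominated.
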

\begin{proof}
	Suppose that $f(r)=\Theta(g(r))$ as $r\to \infty$. Thus there exist $c_1,c_2\in \R_{>0}$ and $n_0\in \N_{>0}$ such that for all $r\geq n_0$ we have that
	\[ c_1g(r)\leq f(r)\leq c_2g(r). \]
	Define $c:=\max\{1/c_1,c_2,1\}$ and take any $r\in \R_{\geq 0}$ arbitrary. Thus we obtain that
	\begin{align*}
		f(r)&\leq f(r+n_0)\leq c_2g(r+n_0)\leq cg(cr+n_0)+n_0\quad \text{and}\\
		g(r)&\leq g(r+n_0)\leq 1/c_1 f(r+n_0)\leq c f(cr+n_0)+n_0
	\end{align*}
	and thus $f\sim g$.\\
    Suppose $f\sim (r\mapsto r^k)$ (with $k\in \N$). Thus there exist $b,c,b',c'\in \R_{>0}$ such that for all $r\in \R_{\geq 0}$ it holds that
	\[ f(r)\leq c(cr+b)^k+b \quad \text{and} \quad r^k\leq c'f(c'r+b')+b'. \]
	Thus we obtain that for all $r\geq b'$ it holds that
	\[\frac{ \left(\frac{r-b'}{c'}\right)^k-b'}{c'} \leq f(r)\leq c(cr+b)^k+b. \]
	Note that $\frac{ \left(\frac{r-b'}{c'}\right)^k-b'}{c'}=\Theta(r^k)$ and $c(cr+b)^k+b=\Theta(r^k)$ as $r\to \infty$. Thus there exist $n_1,n_2\in \N_{>0}$ and $c_1,c_2\in \R_{>0}$ such that for all $r\geq \max\{n_1,n_2\}$ it holds that
	\[c_1r^k \leq \frac{ \left(\frac{r-b'}{c'}\right)^k-b'}{c'}\quad \text{and}\quad c(cr+b)^k+b \leq c_2 r^k. \]
	Hence, define $n_0:=\lceil \max\{b',n_1,n_2\} \rceil\in \N_{>0}$. Then for any $r\geq n_0$ it holds that
	\[c_1r^k \leq \frac{ \left(\frac{r-b'}{c'}\right)^k-b'}{c'} \leq f(r)\leq c(cr+b)^k+b \leq c_2 r^k. \]
	So we argued that $f(r)=\Theta(r^k)$ as $r\to \infty$.
\end{proof}
\begin{remark}
	One can prove that $(r\mapsto 2^r)\sim (r\mapsto 3^r)$, but that $2^r\neq \Theta(3^r)$ as $r\to \infty$. In particular, quasi-equivalence is more rigid than $\Theta$.
\end{remark}

The following lemma will be used frequently to argue that the generating set has no influence on the growth of some particular functions.

\begin{lemma}\label{lem:V(r) independent of generating set}
	Let $G$ be finitely generated by $S$ and let $R\subset G$ be a subset of $G$. If $L:R\to \mathcal{P}(G)$ is some function assigning to each group element $x\in R$ a subset $L(x)$ of $G$, then
	\[ (r\mapsto\# \{x\in R\:\vert\: L(x)\cap B_G^S(r)\neq \emptyset\})\sim (r\mapsto\# \{x\in R\:\vert\: L(x)\cap B_G^T(r)\neq \emptyset\}) \]
	for any other finite generating set $T$ of $G$.\\
	In particular, suppose that there is some $k\in \N$ such that
	\[ \# \{x\in R\:\vert\: L(x)\cap B_G^S(r)\neq \emptyset\}=\Theta(r^k)\text{ as } r\to \infty, \]
	then this holds for all finite generating sets of $G$.
\end{lemma}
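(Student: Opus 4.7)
The plan is to exploit the classical fact that word metrics with respect to different finite generating sets are bi-Lipschitz equivalent. Concretely, since both $S$ and $T$ are finite generating sets of $G$, every element of $T \cup T^{-1}$ can be written as a product of finitely many elements of $S \cup S^{-1}$. Letting $M := \max\{\norm{t}_S \:\vert\: t \in T \cup T^{-1}\}$, one immediately obtains $\norm{g}_S \leq M \norm{g}_T$ for all $g \in G$, and symmetrically $\norm{g}_T \leq M' \norm{g}_S$ with $M' := \max\{\norm{s}_T \:\vert\: s \in S \cup S^{-1}\}$. These inequalities translate into the ball inclusions $B_G^T(r) \subset B_G^S(Mr)$ and $B_G^S(r) \subset B_G^T(M'r)$ for every $r \in \R_{\geq 0}$.

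From this, the quasi-equivalence is essentially immediate. Define
\[ F_S(r) := \#\{x \in R \:\vert\: L(x) \cap B_G^S(r) \neq \emptyset\} \]
and analogously $F_T$. If $x \in R$ satisfies $L(x) \cap B_G^S(r) \neq \emptyset$, then the inclusion $B_G^S(r) \subset B_G^T(M' r)$ gives $L(x) \cap B_G^T(M' r) \neq \emptyset$. Therefore $F_S(r) \leq F_T(M' r)$, and symmetrically $F_T(r) \leq F_S(M r)$. Taking $c := \max\{M, M', 1\}$ and $b := 1$, Definition \ref{def:quasi-equivalent} is satisfied in both directions, so $F_S \sim F_T$.

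For the second assertion, I will simply chain together the two parts of Lemma \ref{lem:asymptotics growth independent}. Suppose $F_S(r) = \Theta(r^k)$ as $r \to \infty$. By the first part of Lemma \ref{lem:asymptotics growth independent}, this implies $F_S \sim (r \mapsto r^k)$. Combined with the quasi-equivalence $F_T \sim F_S$ just established, and the transitivity of $\sim$ (which is straightforward from the definition), we obtain $F_T \sim (r \mapsto r^k)$. Applying the second part of Lemma \ref{lem:asymptotics growth independent} then yields $F_T(r) = \Theta(r^k)$ as $r \to \infty$, as desired.

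There is no real obstacle here; the only point that requires a little care is verifying that the pure multiplicative comparison $F_S(r) \leq F_T(M' r)$ fits the shape $F_S(r) \leq c F_T(cr + b) + b$ demanded by Definition \ref{def:quasi-equivalent}, but this is trivial by taking $c \geq M'$ and any $b \geq 0$. Transitivity of $\sim$ is a brief exercise in composing affine bounds that I would verify in a single line if needed.
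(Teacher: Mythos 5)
Your proof is correct and follows essentially the same route as the paper's: both rest on the bi-Lipschitz equivalence of word metrics for different finite generating sets (you derive it directly from finiteness of $S$ and $T$, the paper cites Proposition 5.2.5 of \cite{loh17}), translate it into nested ball inclusions, deduce the two quasi-domination inequalities using monotonicity of the counting functions, and obtain the $\Theta$-statement by chaining the two parts of Lemma \ref{lem:asymptotics growth independent} through transitivity of $\sim$. No gaps.
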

\begin{proof}
	Fix any finite generating set $T$ of $G$. By for example Proposition 5.2.5 in \cite{loh17} there exists some $d\in \R_{>0}$ such that for all $x\in G$ it holds that
	\[ \frac{1}{d}\norm{x}_S\leq \norm{x}_T\leq d\norm{x}_S. \]
	Denote $V^S(r):=\{x\in R\:\vert\: L(x)\cap B_G^S(r)\neq \emptyset\}$ (and similar for $T$). Note that now for all $r\in \R_{\geq 0}$ it holds that
	\[ V^S(r/d)\subset V^T(r) \subset V^S(dr). \]
	Define $c:=\max\{d,1\}$ and note that for all $r\in \R_{\geq 0}$ we obtain that
	\begin{align*}
		\# V^T(r)&\leq \# V^S(dr)\leq c\# V^S(cr)\leq c\# V^S(cr+1)+1\text{ and}\\
		\# V^S(r)&\leq \# V^T(dr)\leq c\# V^T(cr)\leq c\# V^T(cr+1)+1
	\end{align*}
	and thus it follows that $(r\mapsto \# V^T(r))\sim (r\mapsto \# V^S(r))$.\\
	Moreover, if $\# V^S(r)=\Theta(r^k)$ as $r\to \infty$ (for some $k\in \N$), then by Lemma \ref{lem:asymptotics growth independent} and the above, we have that $\# V^T(r)=\Theta(r^k)$ as $r\to \infty$.
\end{proof}

Lemma \ref{lem:V(r) independent of generating set} can be used to argue that the growth of $B_G^S$ and $f_R^S$ is independent of the generating set when considering quasi-equivalence. Moreover, if they grow polynomially, then together with Lemma \ref{lem:asymptotics growth independent} we obtain the same result when considering $\Theta$.

\begin{corollary}\label{cor:B_G and f_R independent of generating set}
	Let $S$ and $T$ be two finite generating sets for $G$. Then
	\[ (r\mapsto \beta_G^S(r))\sim (r\mapsto\beta_G^T(r)) \quad \text{and} \quad (r\mapsto f_R^S(r))\sim (r\mapsto f_R^T(r)). \]
\end{corollary}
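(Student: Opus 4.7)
The plan is to apply Lemma \ref{lem:V(r) independent of generating set} twice, once for each growth function, with appropriately chosen $R \subset G$ and $L: R \to \mathcal{P}(G)$.

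First, for the word growth $\beta_G^S$: I would take $R := G$ and define $L(x) := \{x\}$ for every $x \in G$. Then
\[ \{x \in R \:\vert\: L(x) \cap B_G^S(r) \neq \emptyset\} = \{x \in G \:\vert\: x \in B_G^S(r)\} = B_G^S(r), \]
so the counting function in Lemma \ref{lem:V(r) independent of generating set} is exactly $\beta_G^S$. The lemma then immediately yields $(r \mapsto \beta_G^S(r)) \sim (r \mapsto \beta_G^T(r))$.

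Second, for the twisted conjugacy growth $f_R^S$: I would let $R$ be a set of representatives of the $\varphi$-twisted conjugacy classes of $G$ (picking exactly one $x$ from each class $[x]_{\varphi}$ using the axiom of choice), and define $L(x) := [x]_{\varphi}$. Then
\[ \{x \in R \:\vert\: L(x) \cap B_G^S(r) \neq \emptyset\} \]
is in bijection with the set of twisted conjugacy classes meeting $B_G^S(r)$, so its cardinality equals $f_R^S(r)$. Lemma \ref{lem:V(r) independent of generating set} therefore gives $(r \mapsto f_R^S(r)) \sim (r \mapsto f_R^T(r))$.

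There is no real obstacle here; the corollary is essentially a translation of Lemma \ref{lem:V(r) independent of generating set} into the specific language of word growth and twisted conjugacy growth. The only minor point to be careful about is ensuring that the chosen representative set $R$ for the twisted conjugacy classes gives the right count, but this follows from the fact that distinct representatives lie in distinct classes, so $\{x \in R : [x]_{\varphi} \cap B_G^S(r) \neq \emptyset\}$ injects into (and surjects onto) the set of classes meeting $B_G^S(r)$.
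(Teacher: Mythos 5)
Your proposal is correct and is exactly the intended argument: the paper states this corollary as an immediate consequence of Lemma \ref{lem:V(r) independent of generating set}, applied with $R=G$, $L(x)=\{x\}$ for word growth and with $R$ a set of representatives of the $\varphi$-twisted conjugacy classes, $L(x)=[x]_{\varphi}$ for twisted conjugacy growth. Nothing is missing.
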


\begin{corollary}\label{cor:twisted conjugacy growth independent generating set}
	Let $G$ be finitely generated by $S$. If there exists $k,l\in \N$ with $l>0$ such that $f_R^S(r)=\Theta(r^k)$ and $\beta_G^S(r)=\Theta(r^l)$ as $r\to \infty$, then this is independent of the generating set. Moreover, the twisted conjugacy growth is independent of the generating set and equals $k/l$.
\end{corollary}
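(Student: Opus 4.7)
The plan is to combine the quasi-equivalence statement of Corollary \ref{cor:B_G and f_R independent of generating set} with both directions of Lemma \ref{lem:asymptotics growth independent} to transfer the polynomial bounds from $S$ to an arbitrary generating set $T$, after which a routine logarithmic computation handles the twisted conjugacy growth rate.

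First I would fix any other finite generating set $T$ of $G$. By Corollary \ref{cor:B_G and f_R independent of generating set} we have the quasi-equivalences $f_R^S \sim f_R^T$ and $\beta_G^S \sim \beta_G^T$. The hypothesis $f_R^S(r) = \Theta(r^k)$ together with the first half of Lemma \ref{lem:asymptotics growth independent} gives $f_R^S \sim (r \mapsto r^k)$, and chaining with $f_R^S \sim f_R^T$ yields $f_R^T \sim (r \mapsto r^k)$. The ``moreover'' half of Lemma \ref{lem:asymptotics growth independent} then upgrades this to $f_R^T(r) = \Theta(r^k)$ as $r \to \infty$. The identical argument applied to $\beta_G$ with $l$ in place of $k$ produces $\beta_G^T(r) = \Theta(r^l)$, which settles the first assertion.

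For the second assertion I would work directly from the $\Theta$-bounds just obtained. Pick constants $c_1, c_2, c_3, c_4 \in \R_{>0}$ and $n_0 \in \N_{>0}$ such that for every $r \geq n_0$ one has
\[ c_1 r^k \leq f_R^T(r) \leq c_2 r^k \quad \text{and} \quad c_3 r^l \leq \beta_G^T(r) \leq c_4 r^l. \]
Because $l > 0$, the denominator $\log \beta_G^T(r)$ tends to $+\infty$, so taking logarithms is legitimate for all sufficiently large $r$; the inequalities give $\log f_R^T(r) = k \log r + O(1)$ and $\log \beta_G^T(r) = l \log r + O(1)$. Dividing and letting $r \to \infty$ yields the limit $k/l$, independently of $T$.

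The only real pitfall is the positivity check required before taking logarithms. This is immediate from $l > 0$, which forces $\beta_G^T(r) \to \infty$ and hence $\log \beta_G^T(r) > 0$ eventually; the case $k = 0$ causes no trouble because the numerator is then $O(1)$ while the denominator still diverges, producing the correct limit $0 = k/l$. Apart from this bookkeeping, the argument is essentially mechanical once the two cited results are in hand.
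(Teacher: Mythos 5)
Your argument is correct and matches the route the paper intends: the corollary is deduced exactly by combining Corollary \ref{cor:B_G and f_R independent of generating set} with both directions of Lemma \ref{lem:asymptotics growth independent} to transfer the $\Theta$-bounds to any other generating set, after which the limit defining the twisted conjugacy growth is a routine logarithm computation (and your handling of the $k=0$ case and the positivity of the denominator is the right bookkeeping).
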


%%%%%%%%%%%%%%%%%%%%%%%%%%%%%%%%%%%%%%%%%%%%%%%%%%%%%%%%%%%%%%%%%%%%%%%%%%%%%%%%%%%%%%%%%%%%%%%%%%%%%%%%%%%%%%%%%%%%%%%%%%%%%%%%%
%%%%%%%%%%%%%%%%%%%%%%%%%%%%%%%%%%%%%%%%%%%%%%%Twisted conjugacy growth of $\Z^n$%%%%%%%%%%%%%%%%%%%%%%%%%%%%%%%%%%%%%%%%%%%%%%%%
%%%%%%%%%%%%%%%%%%%%%%%%%%%%%%%%%%%%%%%%%%%%%%%%%%%%%%%%%%%%%%%%%%%%%%%%%%%%%%%%%%%%%%%%%%%%%%%%%%%%%%%%%%%%%%%%%%%%%%%%%%%%%%%%%

\section{Twisted conjugacy growth of \texorpdfstring{$\Z^n$}{Z^n}}\label{sec:tc growth Z^n}
In this section, we determine the twisted conjugacy growth of $\Z^n$. The main argument is given in the following lemma. The twisted conjugacy growth of $\Z^n$ will then be a direct corollary.
\begin{lemma}\label{lem:V_Id^S(r)}
	Define for any finite generating set $S$ of $\Z^n$ and $B\in \Z^{n\times n}$ the set
	\[ V^S(r):=\left\{x+\Image(B)\:\middle\vert\: (x+\Image(B))\cap B_{\Z^n}^S(r)\neq \emptyset\right\}\subset \Z^n/\Image(B).\]
	Then $\# V^S(r)=\Theta(r^{n-\rank(B)})$ as $r\to\infty$ for all finite generating sets $S$ of $\Z^n$.
\end{lemma}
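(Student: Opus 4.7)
The plan is to combine Lemma~\ref{lem:V(r) independent of generating set} (which lets us choose a convenient generating set) with the Smith normal form of $B$ (which puts $\Image(B)$ into a diagonal shape where the counting becomes elementary).

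More precisely, I would write $PBQ = D$ with $P, Q \in \Gl_n(\Z)$ and $D = \diag(d_1, \dots, d_k, 0, \dots, 0)$, where $k = \rank(B)$ and $d_1, \dots, d_k \in \N_{>0}$. Since $Q$ is invertible, $\Image(B) = P^{-1}\Image(D)$, so the automorphism $P \in \Aut(\Z^n)$ yields a bijection between the cosets of $\Image(B)$ meeting $B_{\Z^n}^S(r)$ and the cosets of $\Image(D)$ meeting $B_{\Z^n}^{P(S)}(r)$. As $P(S)$ is another finite generating set of $\Z^n$, Lemma~\ref{lem:V(r) independent of generating set} applied to the subgroup $\Image(D)$ allows me to replace $P(S)$ by the standard generating set $S_0 = \{e_1, \dots, e_n\}$ up to quasi-equivalence. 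It is therefore enough to compute the analogous counting function for $\Image(D)$ with respect to $S_0$.

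With the standard generating set the ball is the $\ell^1$-ball $\{x \in \Z^n : |x_1| + \dots + |x_n| \leq r\}$, and the projection $\pi \colon \Z^n \to \Z^n/\Image(D) \cong (\Z/d_1\Z) \oplus \dots \oplus (\Z/d_k\Z) \oplus \Z^{n-k}$ just reduces the first $k$ coordinates modulo $d_1, \dots, d_k$ and keeps the last $n-k$ coordinates. An upper bound of order $r^{n-k}$ is immediate, since the torsion part contributes the constant $d_1 \cdots d_k$ while the free part has $O(r^{n-k})$ lattice points of $\ell^1$-norm at most $r$. For the matching lower bound I would exhibit the $\Theta(r^{n-k})$ vectors of the form $(0, \dots, 0, x_{k+1}, \dots, x_n)$ with $|x_{k+1}| + \dots + |x_n| \leq r$: these all lie in the ball and project to pairwise distinct cosets because $\pi$ is the identity on the last $n-k$ coordinates.

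Combining the two bounds gives $\Theta(r^{n-k})$ for the diagonal case, and the second half of Lemma~\ref{lem:asymptotics growth independent} then transports this $\Theta$-behaviour through the quasi-equivalence back to $\# V^S(r)$ for the original $B$ and any finite generating set $S$. I do not foresee any genuine obstacle; the only step demanding a bit of care is confirming that the change of basis induced by $P$ is compatible with Lemma~\ref{lem:V(r) independent of generating set}, and this is automatic because $P$ is a bijection of $\Z^n$ that simultaneously carries the subgroup $\Image(B)$ and the ball $B_{\Z^n}^S(r)$ to their counterparts for $\Image(D)$.
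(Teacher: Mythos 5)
Your proposal is correct, and its overall skeleton --- Smith normal form combined with Lemma~\ref{lem:V(r) independent of generating set} to reduce to the standard generating set and a diagonal matrix --- is the same as the paper's. The difference lies in how the diagonal case is then counted. The paper builds an explicit bijection $\tilde\theta$ between $V^S(r)$ and a set of \emph{minimal representatives} inside the ball: it writes each coordinate as $x_i=q_id_i+a_i$ with $a_i$ in a symmetric residue window and proves, with some care, that $|a_i|\leq|x_i|$, so that the minimal representative of any element of the ball again lies in the ball; this yields an exact formula $\#\tilde V(r)=\sum_{z\in V'}\beta_{\Z^{n-l}}^{\varepsilon}\bigl(r-\sum_{i\leq l}|z_i|\bigr)$. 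You instead exploit the quotient structure $\Z^n/\Image(D)\cong(\Z/d_1\Z)\oplus\dots\oplus(\Z/d_k\Z)\oplus\Z^{n-k}$ and give matching one-sided bounds: the image of the $\ell^1$-ball under the projection sits inside the full torsion part times the $\ell^1$-ball of $\Z^{n-k}$ (upper bound), while the zero-torsion slice $(0,\dots,0,x_{k+1},\dots,x_n)$ already contributes $\Theta(r^{n-k})$ distinct cosets (lower bound). Your route is shorter and avoids the minimal-representative computation entirely, at the cost of not producing an exact count --- which is not needed for a $\Theta$-estimate. The remaining steps are sound: the automorphism $P$ does carry $\Image(B)$ to $\Image(D)$ and $B_{\Z^n}^S(r)$ to $B_{\Z^n}^{P(S)}(r)$, so the coset counts agree exactly, and Lemmas~\ref{lem:V(r) independent of generating set} and~\ref{lem:asymptotics growth independent} transport the $\Theta$-behaviour back to an arbitrary generating set, exactly as the paper does at the start and end of its own proof.
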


\begin{proof}
    Note that by Lemma \ref{lem:V(r) independent of generating set} the statement holds for any finite generating set if we are able to prove it for one specific finite generating set $S$ of $\Z^n$. We will use this approach in the proof and construct a specific set $S$.
    
    Denote with $\Lambda=\diag(d_1,\dots,d_l,0,\dots,0)$ the Smith normal form of $B$ where $d_1|d_2|\dots|d_l$ with $d_i\in \N_{>0}$ and $P,Q\in \Gl_n(\Z)$ invertible matrices such that $P^{-1}BQ=\Lambda$. Note that $l=\rank(B)$. Denote with $S=\{v_1,\dots,v_n\}$ the columns of $P$ and with $T=\{w_1,\dots,w_n\}$ the columns of $Q$. Hence, it holds for all $i=1,\dots,n$ that
	\[ Bw_i=\begin{cases}
		d_iv_i &\text{if } i\leq l\\
		0 &\text{if } i>l
	\end{cases}. \]
	Since $S$ and $T$ are both $\Z$-bases of $\Z^n$, it follows that $\Image(B)=\{\sum_{i=1}^{l}z_id_iv_i\:\vert\: z_i\in \Z\}$. Note that the size of $V^S(r)$ does not change if we assume that $S=\varepsilon$ (the standard generating set of $\Z^n$) and that $B=\Lambda$. Hence, we assume this and thus
	\[ B_{\Z^n}^S(r)=\left\{(z_1,\dots,z_n)\in \Z^n\:\middle\vert\: \sum_{i=1}^{n}|z_i|\leq r \right\} \quad \text{and} \quad \Image(B)=\{(z_1d_1,\dots,z_ld_l,0,\dots,0)\:\vert\: z_i\in \Z\}. \]
	
    Define the sets $V\subset \Z^n$ and $\tilde{V}(r)\subset B_{\Z^n}^S(r)$ (for all $r\in \R_{\geq 0}$) by
	\[ V:=\left\{(z_1,\dots,z_n)\in \Z^n\:\middle\vert\: \forall i\leq l:\: \left\lfloor -\frac{d_i}{2}\right\rfloor +1\leq z_i \leq \left\lfloor \frac{d_i}{2}\right\rfloor \right\} \quad \text{and} \quad \tilde{V}(r):=V\cap B_{\Z^n}^S(r). \]
	We claim that $\tilde{V}(r)$ contains precisely one representative of each coset of $\Image(B)$ in $\Z^n$ that intersect $B_{\Z^n}^S(r)$. In particular, it consists of so-called \textit{minimal representative elements} of each such coset. For this, define the map $\tilde{\theta}$ (for all $r\in \R_{\geq 0}$) by setting
	\[ \tilde{\theta}:\tilde{V}(r)\to V^S(r):x\mapsto x+\Image(B). \]
	We argue that $\tilde{\theta}$ is a bijection and thus $\# V^S(r)=\# \tilde{V}(r)$ for all $r\in \R_{\geq 0}$. For the injectivity, take $(x_1,\dots,x_n),(y_1,\dots,y_n)\in \tilde{V}(r)$ with $x+\Image(B)=y+\Image(B)$. Hence, there exist $z_i\in \Z$ such that $(x_1-y_1,\dots,x_n-y_n)=(z_1d_1,\dots,z_ld_l,0,\dots,0)$. Note that for all $i\leq l$ it holds that $|x_i-y_i|\leq d_i-1$ and thus (since $d_i|(x_i-y_i)$) it follows that $(x_1,\dots,x_n)=(y_1,\dots,y_n)$. So $\tilde{\theta}$ is injective.\\
    For the surjectivity, fix an arbitrary $(x_1,\dots,x_n)+\Image(B)\in V^S(r)$ with representative $x=(x_1,\dots,x_n)\in B_{\Z^n}^S(r)$. Write for all $i=1,\dots,l$
	\[ x_i=q_id_i+a_i \]
	where $q_i,a_i\in \Z$ and $\left\lfloor -\frac{d_i}{2}\right\rfloor +1\leq a_i \leq \left\lfloor \frac{d_i}{2}\right\rfloor$. Define $y:=(a_1,\dots,a_l,x_{l+1},\dots,x_n)\in V$ and note that $x-y=(q_1d_1,\dots,q_ld_l,0,\dots,0)\in \Image(B)$. So it suffices to argue that $y\in B_{\Z^n}^S(r)$. We claim that $|a_i|\leq|x_i|$ for all $i\leq l$. Indeed, if $q_i=0$ then $x_i=a_i$ and thus clearly $|a_i|\leq|x_i|$. Assume that $q_i \neq 0$. Hence, we have that
	\[ |x_i-a_i|=|q_i|d_i\geq d_i. \]
	By the choice of $a_i$, we have that $|a_i|\leq \left\lfloor \frac{d_i}{2}\right\rfloor$. Also $\left\lfloor \frac{d_i}{2}\right\rfloor \leq |x_i|$, since otherwise we would have
	\[ |x_i-a_i|\leq |x_i|+|a_i|< \left\lfloor \frac{d_i}{2}\right\rfloor+\left\lfloor \frac{d_i}{2}\right\rfloor\leq d_i \]
	which contradicts that $|x_i-a_i|\geq d_i$. Hence, it follows that $|a_i|\leq \left\lfloor \frac{d_i}{2}\right\rfloor\leq |x_i|$ for all $i\leq l$. Since $x\in B_{\Z^n}^S(r)$ we now find
	\[ \norm{y}_S=\sum_{i=1}^{l} |a_i|+\sum_{i=l+1}^{n} |x_i|\leq \sum_{i=1}^n |x_i|\leq r \]
	and thus $y\in V\cap B_{\Z^n}^S(r)=\tilde{V}(r)$. Concluding, we obtain that $\tilde{\theta}(y)=y+\Image(B)=x+\Image(B)$ and thus $\tilde{\theta}$ is surjective.
	
    Since $\tilde{\theta}$ is bijective, we find that $\# V^S(r)=\# \tilde{V}(r)$. Denote with $V'\subset \Z^l$ the set
	\[ V':=\left\{(z_1,\dots,z_l)\in \Z^l\:\middle\vert\: \forall i\leq l:\: \left\lfloor -\frac{d_i}{2}\right\rfloor +1\leq z_i \leq \left\lfloor \frac{d_i}{2}\right\rfloor \right\} \]
	and note that for all $r\geq \sum_{i\leq l} d_i$ it holds that
	\begin{align*}
		\# \tilde{V}(r)&=\sum_{(z_1,\dots,z_l)\in V'} \#\left\{(x_1,\dots,x_n)\in \Z^n\:\middle\vert\: x_i=z_i, \:\forall i\leq l \text{ and } \norm{x}_S\leq r \right\}\\
		&=\sum_{(z_1,\dots,z_l)\in V'} \#\left\{(x_1,\dots,x_n)\in \Z^n\:\middle\vert\: x_i=z_i, \:\forall i\leq l \text{ and } \sum_{i>l} |x_i|\leq r-\sum_{i\leq l}|z_i| \right\}\\
		&=\sum_{(z_1,\dots,z_l)\in V'} \beta_{\Z^{n-l}}^{\varepsilon}\left(r-\sum_{i\leq l}|z_i|\right)
	\end{align*}
	Since $B_{\Z^{n-l}}^{\varepsilon}(r)=\Theta(r^{n-l})$ as $r\to \infty$ (see for example \cite[Example 6.1.2]{loh17}) and $\# V'<\infty$, it follows that
	\[ \# V^S(r)=\# \tilde{V}(r)=\Theta(r^{n-\rank(B)}) \text{ as } r\to \infty. \]
	By Lemma \ref{lem:V(r) independent of generating set} this holds for all finite generating sets $S$ of $\Z^n$.
\end{proof}

Lemma \ref{lem:V_Id^S(r)} allows us to determine the twisted conjugacy growth of $\Z^n$.

\begin{theorem}\label{thm:twisted conjugacy growth Z^n}
	Let $S$ be any finite generating set of $\Z^n$ and $\varphi\in \End(\Z^n)$, then
	\[ f_R^S(r)=\Theta(r^{n-\rank(\mathds{1}_n-\varphi)}) \text{ as } r\to \infty. \]
	In particular, the twisted conjugacy growth of $\varphi$ equals
	\[ 1-\frac{\rank(\mathds{1}_n-\varphi)}{n}. \]
\end{theorem}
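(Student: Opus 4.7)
The plan is to reduce the statement directly to Lemma \ref{lem:V_Id^S(r)} by identifying the $\varphi$-twisted conjugacy classes of $\Z^n$ with the cosets of the image of $\mathds{1}_n - \varphi$, and then invoking Corollary \ref{cor:twisted conjugacy growth independent generating set} for the quotient claim.

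First I would unpack the definition of twisted conjugacy in the abelian setting. Writing $\Z^n$ additively, two elements $x, y \in \Z^n$ satisfy $x \sim_{\varphi} y$ iff there exists $z \in \Z^n$ with $x = z + y - \varphi(z)$, equivalently $x - y = (\mathds{1}_n - \varphi)(z) \in \Image(\mathds{1}_n - \varphi)$. Hence the $\varphi$-twisted conjugacy classes of $\Z^n$ are precisely the cosets of the subgroup $\Image(\mathds{1}_n - \varphi) \subset \Z^n$. This is the only genuinely new observation; everything afterwards is bookkeeping.

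Next, setting $B := \mathds{1}_n - \varphi \in \Z^{n \times n}$, I would observe that a coset $x + \Image(B)$ intersects $B_{\Z^n}^S(r)$ if and only if it belongs to the set $V^S(r)$ of Lemma \ref{lem:V_Id^S(r)}. Because each coset is either entirely disjoint from $B_{\Z^n}^S(r)$ or intersects it (being counted once either way), we get the identification
\[
f_R^S(r) \;=\; \#\{\, x + \Image(B) \,\mid\, (x+\Image(B)) \cap B_{\Z^n}^S(r) \neq \emptyset \,\} \;=\; \# V^S(r).
\]
Applying Lemma \ref{lem:V_Id^S(r)} directly with this choice of $B$ yields $f_R^S(r) = \Theta(r^{n - \rank(\mathds{1}_n - \varphi)})$ as $r \to \infty$, for any finite generating set $S$.

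For the "in particular" part, I would recall that the word growth of $\Z^n$ satisfies $\beta_{\Z^n}^S(r) = \Theta(r^n)$ (which is the $B = 0$ case of Lemma \ref{lem:V_Id^S(r)}, or just the classical polynomial growth of $\Z^n$). Then Corollary \ref{cor:twisted conjugacy growth independent generating set} with $k = n - \rank(\mathds{1}_n - \varphi)$ and $l = n$ gives that the twisted conjugacy growth equals $k/l = 1 - \rank(\mathds{1}_n - \varphi)/n$, independent of $S$. There is no real obstacle here — the conceptual content is entirely absorbed into Lemma \ref{lem:V_Id^S(r)}; the work of the present theorem is merely to observe that in an abelian group the twisted conjugacy relation collapses to a coset description.
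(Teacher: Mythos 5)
Your proposal is correct and follows exactly the paper's own argument: the paper likewise notes that $[x]_{\varphi}=x+\Image(\mathds{1}_n-\varphi)$, identifies the set of classes meeting $B_{\Z^n}^S(r)$ with the set $V^S(r)$ of Lemma \ref{lem:V_Id^S(r)} for $B=\mathds{1}_n-\varphi$, and then cites $\beta_{\Z^n}^S(r)=\Theta(r^n)$ together with Corollary \ref{cor:twisted conjugacy growth independent generating set} for the quotient claim. No gaps and no meaningful differences.
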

\begin{proof}
	Fix any finite generating set $S$ of $\Z^n$ and any $\varphi\in \End(\Z^n)$. Note that for all $x\in \Z^n$ it holds that
	\[ [x]_{\varphi}=x+\Image(\mathds{1}_n-\varphi) \]
	and thus for all $r\in \R_{\geq 0}$ it holds that
	\[ \{[x]_{\varphi}\:\vert\: x\in B_{\Z^n}^S(r)\}=\left\{x+\Image(\mathds{1}_n-\varphi)\:\middle\vert\: (x+\Image(\mathds{1}_n-\varphi))\cap B_{\Z^n}^S(r)\neq \emptyset\right\}.\]
	Hence, by Lemma \ref{lem:V_Id^S(r)} it follows that $f_R^S(r)=\Theta(r^{n-\rank(\mathds{1}_n-\varphi)})$ as $r\to \infty$. Since $B_{\Z^n}^{S}(r)=\Theta(r^n)$ as $r\to \infty$ (which is contained in example \cite[Example 6.1.2]{loh17}), the result follows by Corollary \ref{cor:twisted conjugacy growth independent generating set}.
\end{proof}

%%%%%%%%%%%%%%%%%%%%%%%%%%%%%%%%%%%%%%%%%%%%%%%%%%%%%%%%%%%%%%%%%%%%%%%%%%%%%%%%%%%%%%%%%%%%%%%%%%%%%%%%%%%%%%%%%%%%%%%%%%%%%%%%%
%%%%%%%%%%%%%%%%%%%%%%%%%%%Twisted conjugacy growth of finitely generated virtually abelian groups%%%%%%%%%%%%%%%%%%%%%%%%%%%%%%%
%%%%%%%%%%%%%%%%%%%%%%%%%%%%%%%%%%%%%%%%%%%%%%%%%%%%%%%%%%%%%%%%%%%%%%%%%%%%%%%%%%%%%%%%%%%%%%%%%%%%%%%%%%%%%%%%%%%%%%%%%%%%%%%%%

\section{Twisted conjugacy growth of virtually abelian groups}\label{sec:tc growth virt ab}
In this section, we determine the twisted conjugacy growth of all finitely generated virtually abelian groups. For this, we need a generalisation of Lemma \ref{lem:V_Id^S(r)}. In the proof, we describe an upper and lower bound of the considered function and use Lemma \ref{lem:V_Id^S(r)} to obtain the asymptotics of these bounds.
\begin{lemma}\label{lem:layer A}
	Let $B\in \Z^{n\times n}$ and $M_1=\mathds{1}_n,M_2,\dots,M_k\in \Gl_n(\Z)$ (with $k,n\in \N_{>0}$) such that $B$ commutes with all matrices $M_i$. Let $c_1=(0,\dots,0),c_2,\dots,c_k\in \Z^n$. Define for any finite generating set $S$ of $\Z^n$ the set
	\[ V^S(r):=\left\{x+\Image(B)\:\middle\vert\: \left( \bigcup_{i=1}^k (M_ix+c_i+\Image(B))\right)\cap B_{\Z^n}^S(r)\neq \emptyset\right\}.\]
	Then $\# V^S(r)=\Theta(r^{n-\rank(B)})$ as $r\to\infty$ for any $S$.
\end{lemma}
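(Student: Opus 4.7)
The plan is to reduce this generalization to Lemma \ref{lem:V_Id^S(r)} by exploiting that each $M_i$ descends to a bijection of the quotient $\Z^n/\Image(B)$, and then to squeeze $\#V^S(r)$ between the count for $i=1$ (lower bound) and a $k$-fold sum of such counts (upper bound).

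First, I would verify the key commutativity consequence: since $BM_i=M_iB$ and $M_i\in\Gl_n(\Z)$, also $M_i^{-1}$ commutes with $B$, so
\[ M_i\Image(B)=\Image(M_iB)=\Image(BM_i)\subset\Image(B),\qquad M_i^{-1}\Image(B)\subset\Image(B), \]
hence $M_i\Image(B)=\Image(B)$. Therefore the affine map $x\mapsto M_ix+c_i$ descends to a well-defined bijection
\[ \Phi_i:\Z^n/\Image(B)\to \Z^n/\Image(B):x+\Image(B)\mapsto M_ix+c_i+\Image(B). \]
Writing $U^S(r):=\{y+\Image(B)\:\vert\:(y+\Image(B))\cap B^S_{\Z^n}(r)\neq\emptyset\}$, Lemma \ref{lem:V_Id^S(r)} gives $\#U^S(r)=\Theta(r^{n-\rank(B)})$.

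Next, I would rewrite $V^S(r)$ by splitting the union over $i$:
\[ V^S(r)=\bigcup_{i=1}^k V_i^S(r),\qquad V_i^S(r):=\left\{x+\Image(B)\:\middle\vert\:(M_ix+c_i+\Image(B))\cap B^S_{\Z^n}(r)\neq\emptyset\right\}. \]
Because $\Phi_i$ is a bijection on $\Z^n/\Image(B)$, it restricts to a bijection $V_i^S(r)\to U^S(r)$, so $\#V_i^S(r)=\#U^S(r)=\Theta(r^{n-\rank(B)})$ for every $i$. The upper bound then follows from
\[ \#V^S(r)\leq \sum_{i=1}^k \#V_i^S(r)=k\cdot\#U^S(r), \]
and the lower bound from the fact that $M_1=\mathds{1}_n$ and $c_1=0$ force $V_1^S(r)=U^S(r)$, hence
\[ \#V^S(r)\geq \#V_1^S(r)=\#U^S(r). \]
Combining these sandwich inequalities yields $\#V^S(r)=\Theta(r^{n-\rank(B)})$ for the chosen generating set; independence of $S$ is then Lemma \ref{lem:V(r) independent of generating set}.

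I do not expect a real obstacle here: the only non-routine point is the bijectivity of $\Phi_i$ on the possibly infinite quotient $\Z^n/\Image(B)$, which is exactly what the commutativity hypothesis is designed to guarantee. The polynomial asymptotics are then imported wholesale from Lemma \ref{lem:V_Id^S(r)}.
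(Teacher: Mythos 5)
Your proposal is correct, and it handles the crucial step differently from the paper. Both proofs share the same skeleton: decompose $V^S(r)=\bigcup_{i=1}^k V_i^S(r)$, take $V_1^S(r)$ for the lower bound and $\sum_i \# V_i^S(r)$ for the upper bound, and feed everything into Lemma \ref{lem:V_Id^S(r)}. The difference is in how $\# V_i^S(r)$ is controlled for $i\geq 2$. The paper works with the standard generating set, introduces the matrix norm $\norm{A}=\sum_{i,j}|A_{ij}|$, and shows the inclusion $V_i^S(r)\subset V_1^S\bigl(\norm{M_i^{-1}}(r+\norm{c_i}_S)\bigr)$ by pulling a short element $y$ of $M_ix+c_i+\Image(B)$ back to $M_i^{-1}(y-c_i)\in x+\Image(B)$ and estimating its length; the $\Theta$-asymptotics then absorb the dilation of the radius. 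You instead observe that commutativity gives $M_i\Image(B)=\Image(B)$, so $x\mapsto M_ix+c_i$ descends to a bijection $\Phi_i$ of $\Z^n/\Image(B)$ with $V_i^S(r)=\Phi_i^{-1}(V_1^S(r))$, yielding the exact equality $\# V_i^S(r)=\# V_1^S(r)$ with no norm estimates. Your route is cleaner and slightly stronger (exact counts at the same radius rather than an inclusion into a larger ball), and it works verbatim for an arbitrary finite generating set $S$, so the closing appeal to Lemma \ref{lem:V(r) independent of generating set} is not even needed; the paper needs that lemma because its norm estimate is tied to the standard generating set. Both arguments use the commutativity hypothesis in the same essential way, namely to make the cosets $M_ix+c_i+\Image(B)$ depend only on $x+\Image(B)$.
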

\begin{proof}
	We let $S$ be the standard generating set of $\Z^n$ and thus $\norm{(x_1,\dots,x_n)}_S=\sum_{i=1}^n|x_i|$. Note that for $i=1,\dots,k$ the condition $\left(M_ix+c_i+\Image(B)\right)\cap B_{\Z^n}^S(r)\neq \emptyset$ is independent of the representative $x$ of the coset $x+\Image(B)$ since $B$ and $M_i$ commute. Hence, we can fix a set $R\subset \Z^n$ of representatives of $\Z^n/\Image(B)$ and redefine
	\[ V^S(r):=\left\{x\in R\:\middle\vert\: \left( \bigcup_{i=1}^k (M_ix+c_i+\Image(B))\right)\cap B_{\Z^n}^S(r)\neq \emptyset\right\}\subset \Z^n.\]
	Define for any $i=1,\dots,k$ the set
	\[ V_i^S(r):=\left\{x\in R\:\middle\vert\: \left(M_ix+c_i+\Image(B)\right)\cap B_{\Z^n}^S(r)\neq \emptyset\right\}.\]
	Note that $V_1^S(r)\subset V^S(r)$ for any $r\in \R_{\geq 0}$ and thus
	\[ \# V_1^S(r)\leq \# V^S(r). \]
	Define for any matrix $A\in \Z^{n\times n}$ the norm $\norm{A}:=\sum_{i,j}|A_{ij}|$. Note that since $S$ is the standard generating set of $\Z^n$, for any $x\in \Z^n$ it holds that $\norm{Ax}_S\leq \norm{A}\:\norm{x}_S$. Fix any $i=1,\dots,k$. We argue that for all $r\in \R_{\geq 0}$ it holds that
	\[ V_i^S(r)\subset V_1^S(\norm{M_i^{-1}}(r+\norm{c_i}_S)). \]
	For this, take some $x\in V_i^S(r)$ arbitrary. So there is some $y\in \left(M_ix+c_i+\Image(B)\right)\cap B_{\Z^n}^S(r)$. Note that since $B$ and $M_i$ commute, it follows that $M_i^{-1}(y-c_i)\in x+\Image(B)$. Moreover, it holds that
	\[ \norm{M_i^{-1}(y-c_i)}_S\leq \norm{M_i^{-1}}\: \norm{y-c_i}_S\leq \norm{M_i^{-1}}(r+\norm{c_i}_S). \]
	Hence, we obtain that $M_i^{-1}(y-c_i)\in (x+\Image(B))\cap B_{\Z^n}^S(\norm{M_i^{-1}}(r+\norm{c_i}_S))$ and thus indeed $x\in V_1^S(\norm{M_i^{-1}}(r+\norm{c_i}_S))$.\\
	Note that now
	\[ V^S(r)\subset \bigcup_{i=1}^k V_i^S(r)\subset \bigcup_{i=1}^k V_1^S(\norm{M_i^{-1}}(r+\norm{c_i}_S))\]
	and thus in particular we have that
	\[ \# V_1^S(r) \leq \# V^S(r) \leq \sum_{i=1}^k \# V_1^S(\norm{M_i^{-1}}(r+\norm{c_i}_S)). \]
	By Lemma \ref{lem:V_Id^S(r)} we know that $\# V_1^S(r)=\Theta(r^{n-\rank(B)})$ as $r\to \infty$ and thus the above implies that $\# V^S(r)=\Theta(r^{n-\rank(B)})$ as $r\to \infty$. By applying Lemma \ref{lem:V(r) independent of generating set} we now obtain that $\# V^S(r)=\Theta(r^{n-\rank(B)})$ as $r\to \infty$ for all finite generating sets $S$ of $\Z^n$.
\end{proof}

If $G$ is a finitely generated virtually abelian group, then $\Z^n\subset_{\text{fin}} G$. The next lemma will allow us to use the subset $V^S(r)$ of cosets of $\Z^n$ from Lemma \ref{lem:layer A} to describe the twisted conjugacy growth of $G$.

\begin{lemma}\label{lem:inclusion balls}
	Let $G$ be a finitely generated group and $H\subset_{\text{fin}}G$ a finite index subgroup. Then for any $g\in G$ the map $i^g:H\to G:h\mapsto hg$ is a quasi-isometry (with respect to any word metrics on H and G coming from finite generating sets).\\
    In particular, for all finite generating sets $S$ of $G$ and $T$ of $H$ and all $g\in G$ there exist $r_0\in \R_{\geq 0}$, $c_1,c_2\in \R_{>0}$ such that for all $r\geq r_0$ it holds that
	\[ i^g(B_H^T(c_1r))\subset B_G^S(r)\cap i^g(H)\subset i^g(B_H^T(c_2r)). \]
\end{lemma}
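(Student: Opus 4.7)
The plan is to factor $i^g = R_g \circ \iota$, where $\iota: H \hookrightarrow G$ is the inclusion and $R_g: G \to G: x \mapsto xg$ is right translation by $g$, and to show that each factor is a quasi-isometry with explicit constants. Since compositions of quasi-isometries are quasi-isometries, this yields the first assertion; the explicit ball inclusions then follow by tracking the constants, using that the balls are centered at the identity.

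For $\iota$, I would prove the quasi-isometry by direct coset rewriting. Choose a finite generating set $T$ of $H$ (which exists by the argument below, or by Reidemeister--Schreier), pick coset representatives $g_1 = 1_G, g_2, \ldots, g_m$ with $G = \bigsqcup_i H g_i$, and put $D := \max_i \norm{g_i}_S$. One direction is immediate: with $A := \max_{t \in T \cup T^{-1}} \norm{t}_S$, expanding a $T$-word for $h \in H$ into an $S$-word gives $\norm{h}_S \leq A \norm{h}_T$. For the other direction, given $h \in H$ with $\norm{h}_S = r$, write $h = s_1 \cdots s_r$ with $s_i \in S \cup S^{-1}$, let $g_{\alpha(i)}$ be the coset representative of $H s_1 \cdots s_i$ (so $g_{\alpha(0)} = g_{\alpha(r)} = 1_G$), and set $h_i := s_1 \cdots s_i\, g_{\alpha(i)}^{-1} \in H$. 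Then $h = h_r = \prod_{i=1}^{r} (h_{i-1}^{-1} h_i)$, and each factor $h_{i-1}^{-1} h_i = g_{\alpha(i-1)} s_i g_{\alpha(i)}^{-1}$ lies in the finite set $H \cap B_G^S(2D+1)$. Its elements have uniformly bounded $T$-length $\leq B$, yielding $\norm{h}_T \leq B r$.

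For $R_g$, the triangle inequality immediately gives $\bigl|\norm{xg}_S - \norm{x}_S\bigr| \leq \norm{g}_S$, so $R_g$ is a bi-Lipschitz bijection of $(G, d_S)$ with additive distortion $\norm{g}_S$. Combining with the bounds for $\iota$ yields constants $L_1, L_2 > 0$ such that
\[ \tfrac{1}{L_1} \norm{h}_T - L_2 \leq \norm{hg}_S \leq L_1 \norm{h}_T + L_2 \quad \text{for all } h \in H. \]
Setting $c_2 := 2L_1$, any $h \in H$ with $hg \in B_G^S(r)$ and $r \geq L_2$ satisfies $\norm{h}_T \leq L_1(r + L_2) \leq c_2 r$, establishing the right inclusion. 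Setting $c_1 := 1/(2L_1)$, any $h \in B_H^T(c_1 r)$ with $r \geq 2L_2$ satisfies $\norm{hg}_S \leq r/2 + L_2 \leq r$, establishing the left inclusion. Both hold with $r_0 := 2L_2$.

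The only step with real content is the coset rewriting argument in the second paragraph; everything else is the triangle inequality. This step simultaneously establishes finite generation of $H$ (one may take $T := H \cap B_G^S(2D+1)$) and its quasi-isometric embedding in $G$. Independence of the particular choice of finite generating set $T$ of $H$ is then automatic, since any two such generating sets induce bi-Lipschitz equivalent word metrics on $H$ by Proposition 5.2.5 of \cite{loh17}.
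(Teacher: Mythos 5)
Your proof is correct, and it reaches the same ball inclusions by the same triangle-inequality bookkeeping as the paper; the difference lies in how the key input --- that $i^g$ is a quasi-isometric embedding --- is obtained. The paper simply cites L\"oh (Corollary 5.4.5, ultimately the \v{S}varc--Milnor lemma) for the first statement and then extracts the constants $c_1,c_2,r_0$ from the quasi-isometric embedding inequality exactly as you do in your last paragraph. You instead prove the embedding from scratch via the Schreier rewriting argument ($h=\prod_i g_{\alpha(i-1)}s_i g_{\alpha(i)}^{-1}$ with each factor in the finite set $H\cap B_G^S(2D+1)$), which has the advantage of being self-contained, producing explicit constants, and yielding finite generation of $H$ as a byproduct; the cost is that it is longer than a citation, and \v{S}varc--Milnor would give the statement in greater generality. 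One small point worth a sentence in a polished write-up: to upgrade ``quasi-isometric embedding'' to ``quasi-isometry'' in the first assertion you also need quasi-surjectivity of $i^g$, i.e.\ that the coset $Hg$ is coarsely dense in $G$; this is immediate since $d_S(hg_i,hg)=\norm{g_i^{-1}g}_S\leq D+\norm{g}_S$ for every element $hg_i$ of $G$, and it plays no role in the second assertion, which (as the paper also notes) uses only the embedding inequality.
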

\begin{proof}
	The first statement is inspired by Corollary 5.4.5 in \cite{loh17} and follows directly from its proof which is based on the \v{S}varc-Milnor lemma (see for example \cite[Proposition 5.4.1]{loh17}).\\
    The second statement only uses that $i^g$ is a quasi-isometric embedding, i.e. there exists some $c\in \R_{>0}$ such that for all $h,h'\in H$ we have that
	\[ \frac{1}{c}d_T(h,h')-c \leq d_S(i^g(h),i^g(h')) \leq c d_T(h,h')+c. \]
	Define
	\[ c_1:=\frac{1}{2c},\: c_2:=c+1\text{ and } r_0=\max\{2(c+\norm{g}_S),c(c+\norm{g}_S)\}. \]
	Fix any $r\geq r_0$ and any $h\in B_H^T(c_1r)$. Thus we find that
	\[ \norm{i^g(h)}_S \leq d_S(i^g(h),i^g(1_G))+d_S(g,1_G) \leq c \norm{h}_T+c+\norm{g}_S \leq cc_1r+c+\norm{g}_S \leq \frac{r}{2}+\frac{r}{2}=r \]
	and thus indeed $i^g(h)\in B_G^S(r)\cap i^g(H)$.\\
	Take any $i^g(h)\in B_G^S(r)\cap i^g(H)$ and note that
	\[ \norm{h}_T \leq c d_S(i^g(h),i^g(1_G))+c^2 \leq c \norm{i^g(h)}_S + c\norm{g}_S +c^2 \leq cr+r = c_2r \]
	and thus indeed $i^g(h)\in B_H^T(c_2r)$.
\end{proof}

\begin{remark}
Let $G$ be a finitely generated virtually abelian group. So $G$ contains a finitely generated normal abelian subgroup $N_1$ which is of finite index in $G$. Without loss of generality, we may assume that 
$N_1$ is torsion-free (if $l$ is the order of the torsion subgroup of $N_1$, we may replace 
$N_1$ with its finite index characteristic subgroup which is generated by all $n^l$ with $n\in N_1$).
Now let $k=[G:N_1]$ and let $N$ be the subgroup of $G$ which is generated by all elements of the form 
$g^k$. Then $N$ is a fully characteristic subgroup of $G$, which is contained in $N_1$ and so is also free abelian. Note that $N$ is of finite index in $N_1$ (because it contains $\{n_1^k \mid n_1\in N_1\}$ which is clearly a finite index subgroup of $N_1$) hence $N$ is also of finite index in $G$.

Thus any finitely generated group $G$ contains some $N\lhd_{\text{fin}}G$ which is a torsion-free abelian fully characteristic subgroup of $G$. In the rest of the paper, we do not require $N$ to be fully characteristic, but we only need that $N$ is invariant under the fixed morphism $\varphi\in \End(G)$.
\end{remark}

\begin{notation}\label{not:set-up and definition Ea}
    Let $G$ be a finitely generated virtually abelian group and $N\lhd_{\text{fin}}G$ a torsion-free abelian subgroup of $G$. We fix a finite set $A\subset G$, containing $1_G$, of representatives of $G/N$. Denote for any $a\in A$ with $\tau_a\in \Inn(G)$ conjugation with $a$ in $G$ and with $p:G\to G/N$ the projection of $G$ onto $G/N$. If $\varphi\in \End(G)$ with $\varphi(N)\subset N$, then the intersection of any twisted conjugacy class with a coset $aN\in G/N$ (with $a\in A$) is a finite union of cosets of $\Image(\Id_N-\tau_a\circ\varphi\vert_N)$. Before formulating this result in more detail, let us define for any $a\in A$ the finite set $E^a\subset A$ and the map $\varphi^a:A\to G$ by setting
    \begin{equation}\label{eq:definition Ea}
        \begin{split}
            E^a:&=p^{-1}(\Stab_{\olvarphi}(p(a)))\cap A\\
            &=\{b\in A\:\vert \: p(a)=p(ba\varphi(b)^{-1})\}
        \end{split}\quad \text{and} \quad\begin{split}\varphi^a:A\to G:b\mapsto 
            \varphi^a(b)&:=([a,b^{-1}]^{\varphi})^{-1}\\&:=\varphi(b)a^{-1}b^{-1}a
        \end{split}
    \end{equation}
    where $\olvarphi\in \Aut(G/N)$ is the induced automorphism on $G/N$, $\Stab_{\olvarphi}(p(a))$ is the \textit{$\varphi$-stabiliser} of $p(a)$ (i.e. the stabiliser of $p(a)$ under the $\olvarphi$-conjugacy action on $G/N$) and $[x,y]^{\varphi}:=x^{-1}y^{-1}x\varphi(y)$ is the \textit{$\varphi$-twisted commutator} of any $x,y\in G$. Remark that $\varphi^a(E^a)\subset N$ and $1_G\in E^a$.
\end{notation}

\begin{lemma}\label{lem:virt ab finite union}
    With the set-up from Notation \ref{not:set-up and definition Ea}, it holds for any $a\in A$ and for any $x,y\in N$ that
    \[i^a(x)\sim_{\varphi} i^a(y)\quad \Longleftrightarrow\quad x\in \bigcup_{c\in E^a} (\tau_c(y)-(\tau_a\circ\varphi^a)(c)+\Image(\Id_N-\tau_a\circ\varphi\vert_N)).\]
\end{lemma}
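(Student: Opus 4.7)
The approach is to unfold the definition of $\varphi$-twisted conjugacy along the short exact sequence $1\to N \to G \to G/N \to 1$. Starting from $i^a(x)\sim_{\varphi} i^a(y)$, i.e.\ $xa=z\cdot ya\cdot\varphi(z)^{-1}$ for some $z\in G$, I would write $z=cn$ with $c\in A$ and $n\in N$ (this is possible because $A$ is a transversal of $G/N$). Projecting the identity to $G/N$ immediately kills every factor that lies in $N$ and forces $p(ca\varphi(c)^{-1})=p(a)$, so $c\in E^a$ is \emph{necessary}; this pins down the outer index in the stated union.

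For the inner computation, the key observation is that $\varphi^a(c)\in N$ whenever $c\in E^a$: projecting $\varphi^a(c)=\varphi(c)a^{-1}c^{-1}a$ to $G/N$ yields $\overline\varphi(p(c))\cdot p(a)^{-1}p(c)^{-1}p(a)$, which is trivial precisely by the stabiliser condition defining $E^a$. From the definition one reads off the identity $ca\varphi(c)^{-1}=a\cdot\varphi^a(c)^{-1}$. Substituting this into $xa=cn\cdot ya\cdot\varphi(n)^{-1}\varphi(c)^{-1}$, using that $N$ is normal (so $a\varphi(n)^{-1}a^{-1}=(\tau_a\circ\varphi)(n)^{-1}\in N$) and abelian to commute all $N$-factors freely, and finally cancelling the trailing $a$, I would obtain in additive notation on $N$
\[x \;=\; \tau_c(y)\;-\;(\tau_a\circ\varphi^a)(c)\;+\;\tau_c\bigl((\Id_N-\tau_a\circ\varphi|_N)(n)\bigr).\]
As $n$ ranges over $N$, the last summand ranges over $\tau_c(\Image(\Id_N-\tau_a\circ\varphi|_N))$.

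The final and subtlest step is to remove the $\tau_c$ in front of that image. For $c\in E^a$ the stabiliser condition rewrites as $(\tau_a\circ\varphi)(c)\cdot c^{-1}\in N$; since $N$ is abelian, conjugation by elements of $N$ is trivial on $N$, so $\tau_{(\tau_a\circ\varphi)(c)}$ and $\tau_c$ agree on $N$. A one-line check then gives the commutation relation $(\tau_a\circ\varphi|_N)\circ\tau_c|_N=\tau_c|_N\circ(\tau_a\circ\varphi|_N)$, from which $\tau_c(\Image(\Id_N-\tau_a\circ\varphi|_N))=\Image(\Id_N-\tau_a\circ\varphi|_N)$, matching the lemma's description. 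I expect this commutativity check to be the main obstacle: without it the description would only be a union of $\tau_c$-twisted cosets, and the clean statement as formulated would fail. The converse direction simply reverses the computation: given $c\in E^a$ and an element $x$ of the stated coset, the automorphism $\tau_c$ of $N$ recovers an $n\in N$ realising the image term, and $z:=cn$ then witnesses $i^a(x)\sim_{\varphi}i^a(y)$.
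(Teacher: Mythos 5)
Your proof is correct, and it follows the same overall strategy as the paper's (unfold the twisted conjugacy equation, split the conjugator along the transversal $A$, project to $G/N$ to force $c\in E^a$, then rearrange inside the abelian group $N$), but with one genuine structural difference: you decompose the conjugator as $z=cn$ with $n\in N$, whereas the paper writes it as an element of $Nc$, i.e.\ $z=i^c(n)=nc$. With the paper's choice the factor $n$ never gets conjugated past $c$, so the computation lands directly on $\tau_c(y)-(\tau_a\circ\varphi^a)(c)+(\Id_N-\tau_a\circ\varphi\vert_N)(n)$ and the lemma follows with no further work. Your choice produces the extra twist $\tau_c\bigl((\Id_N-\tau_a\circ\varphi\vert_N)(n)\bigr)$, and you correctly identify that removing it requires the commutation $\tau_c\vert_N\circ(\tau_a\circ\varphi\vert_N)=(\tau_a\circ\varphi\vert_N)\circ\tau_c\vert_N$ for $c\in E^a$. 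That commutation is true, your derivation of it (via $(\tau_a\circ\varphi)(c)c^{-1}\in N$ and the triviality of inner automorphisms of $N$ restricted to $N$) is valid and arguably cleaner than the paper's direct expansion, and it is not wasted effort: the paper needs exactly this commutativity anyway in the proof of Theorem \ref{thm:twisted conjugacy growth virtually abelian}, where it verifies the hypotheses of Lemma \ref{lem:layer A}. So the paper's decomposition buys a shorter proof of this particular lemma, while yours front-loads a fact that is needed downstream regardless.
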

\begin{proof}
        Fix some $a\in A$ and recall the definition of $i^a$ in Lemma \ref{lem:inclusion balls} and of $E^a$ and $\varphi^a$ in Equation (\ref{eq:definition Ea}). Note that $\varphi^a(E^a)\subset N$ by definition. It holds for all $x,y\in N$ that
	\begin{align*}
		i^a(x)\sim_{\varphi} i^a(y)&\Longleftrightarrow \exists z\in N,\: c\in A:\: i^a(x)=i^c(z)i^a(y)\varphi(i^c(z))^{-1}\\
		&\Longleftrightarrow \exists z\in N,\: c\in A:\: i^a(x)=i^c(z)i^a(y)\varphi(c)^{-1}\varphi(z)^{-1}
	\end{align*}
        Since $\varphi(N)\subset N$, we find by applying the morphism $p:G\to G/N$ to the last equality that $p(a)=p(ca\varphi(c)^{-1})$ or thus $c\in E^a$. Hence, we obtain that
	\begin{align*}
		i^a(x)\sim_{\varphi} i^a(y)&\Longleftrightarrow \exists z\in N,\: c\in E^a:\: i^a(x)=i^c(z)i^a(y)(\varphi^a(c)a^{-1}ca)^{-1}\varphi(z)^{-1}\\							  
		&\Longleftrightarrow \exists z\in N,\: c\in E^a:\: x=z\tau_c(y)(\tau_a\circ\varphi^a)(c)^{-1}(\tau_a\circ\varphi)(z)^{-1}\\
		&\Longleftrightarrow \exists z\in N,\: c\in E^a:\: x=\tau_c(y)-(\tau_a\circ\varphi^a)(c)+(\Id_N-\tau_a\circ\varphi\vert_N)(z)
	\end{align*}
	where in the last step we used that $\varphi^a(E^a)\subset N$, that $N$ is abelian and we switched to additive notation for convenience. Hence, we obtain for all $x,y\in N$ that
		\[i^a(x)\sim_{\varphi} i^a(y)\quad \Longleftrightarrow\quad x\in \bigcup_{c\in E^a} (\tau_c(y)-(\tau_a\circ\varphi^a)(c)+\Image(\Id_N-\tau_a\circ\varphi\vert_N)).\]
\end{proof}

\begin{remark}\label{rem:virt ab finite union}
        By Lemma \ref{lem:virt ab finite union} it especially follows for all $a\in A$ and $x\in N$ that
        \[x\in \bigcup_{c\in E^a} (\tau_c(x)-(\tau_a\circ\varphi^a)(c)+\Image(\Id_N-\tau_a\circ\varphi\vert_N)).\]
        Moreover, since $1_G\in E^a$ and $(\tau_a\circ\varphi^a)(1_G)=1_G\in N$, we also obtain that $x+\Image(\Id_N-\tau_a\circ\varphi\vert_N)$ is always contained in the above finite union.
\end{remark}

Lemmas \ref{lem:layer A}, \ref{lem:inclusion balls} and \ref{lem:virt ab finite union} give us the necessary tools to determine the twisted conjugacy growth of any finitely generated virtually abelian group.

\begin{theorem}\label{thm:twisted conjugacy growth virtually abelian}
	Let $S$ be any finite generating set of a finitely generated virtually abelian group $G$ and $N\lhd_{\text{fin}}G$ a torsion-free abelian subgroup of $G$. Let $\varphi\in \End(G)$ with $\varphi(N)\subset N$, then
	\[ f_R^S(r)=\Theta\left(r^{\rank(N)-\min\limits_{gN\in G/N}\rank(\Id_N-\tau_g\circ\varphi\vert_N)}\right)\quad \text{and}\quad \#B_G^S(r)=\Theta\left(r^{\rank(N)}\right)\text{ as } r\to \infty \]
	where $\tau_g:G\to G:h\mapsto ghg^{-1}$ denotes conjugation by $g\in G$.\\
	In particular, the twisted conjugacy growth of $\varphi$ equals
	\[ 1-\frac{\min\limits_{gN\in G/N}\rank(\Id_N-\tau_g\circ\varphi\vert_N)}{\rank(N)}. \]
\end{theorem}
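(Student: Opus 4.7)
The plan is to combine Lemmas~\ref{lem:layer A}, \ref{lem:inclusion balls} and \ref{lem:virt ab finite union} in order to reduce the count of $\varphi$-twisted conjugacy classes of $G$ to a coset-counting problem inside $N$. By Corollary~\ref{cor:twisted conjugacy growth independent generating set}, both $\beta_G^S$ and $f_R^S$ have $\Theta$-asymptotics independent of $S$, so I may pick any convenient finite generating set of $G$. The statement $\beta_G^S(r) = \Theta(r^{\rank(N)})$ is then immediate: $N$ is isomorphic to $\Z^{\rank(N)}$, so $\beta_N^T(r) = \Theta(r^{\rank(N)})$ by \cite[Example 6.1.2]{loh17} for any finite generating set $T$ of $N$, and Lemma~\ref{lem:inclusion balls} transfers this asymptotic to $G$ since $N \subset_{\text{fin}} G$.

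For the twisted conjugacy growth, I would stratify $G$ using the coset representatives $A$ from Notation~\ref{not:set-up and definition Ea}, writing $G = \bigsqcup_{a \in A} i^a(N)$, and let $C^a(r)$ denote the number of $\varphi$-twisted conjugacy classes meeting $B_G^S(r) \cap i^a(N)$. Since $A$ is finite, $f_R^S(r) = \Theta(\max_{a \in A} C^a(r))$, so it suffices to determine each $C^a(r)$. For fixed $a$, Lemma~\ref{lem:inclusion balls} yields $i^a(B_N^T(c_1 r)) \subset B_G^S(r) \cap i^a(N) \subset i^a(B_N^T(c_2 r))$ for some constants and a finite generating set $T$ of $N$, and Lemma~\ref{lem:virt ab finite union} identifies $\varphi$-twisted conjugacy on $i^a(N)$ with the equivalence on $N$ whose classes are the finite unions $\bigcup_{c \in E^a}(\tau_c(y) - (\tau_a \circ \varphi^a)(c) + \Image(\Id_N - \tau_a \circ \varphi|_N))$. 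Hence $C^a(r)$ agrees, up to a constant factor of at most $|E^a|$, with the number of cosets of $\Image(\Id_N - \tau_a \circ \varphi|_N)$ whose $E^a$-union meets a ball in $N$ of radius $\Theta(r)$, which is exactly the quantity controlled by Lemma~\ref{lem:layer A}.

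The main obstacle is verifying the hypothesis of Lemma~\ref{lem:layer A} that every $\tau_c|_N$ with $c \in E^a$ commutes with $\Id_N - \tau_a \circ \varphi|_N$, equivalently with $\tau_a \circ \varphi|_N$. This is the step I would slow down on: membership $c \in E^a$ provides some $n_c \in N$ with $a^{-1} c a = n_c \varphi(c)$, and using $\varphi(cnc^{-1}) \in \varphi(N) \subset N$ together with the fact that $N$ is abelian, a direct computation should yield $\tau_c \circ \tau_a \circ \varphi = \tau_a \circ \varphi \circ \tau_c$ on $N$. With commutativity in hand, Lemma~\ref{lem:layer A} gives $C^a(r) = \Theta(r^{\rank(N) - \rank(\Id_N - \tau_a \circ \varphi|_N)})$, and a short check that $\tau_{an} \circ \varphi|_N = \tau_a \circ \varphi|_N$ for $n \in N$ (again using $N$ abelian and $\varphi(N) \subset N$) shows the rank depends only on the coset $aN \in G/N$. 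Maximising over $a \in A$ therefore amounts to minimising $\rank(\Id_N - \tau_g \circ \varphi|_N)$ over $gN \in G/N$, yielding the stated exponent, and the ``in particular''-clause follows from Corollary~\ref{cor:twisted conjugacy growth independent generating set}.
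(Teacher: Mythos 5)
Your proposal is correct and follows essentially the same route as the paper's proof: the same stratification of $G$ into the cosets $i^a(N)$, the same sandwich between $\max_a$ and $\sum_a$, the same use of Lemmas~\ref{lem:inclusion balls} and \ref{lem:virt ab finite union} to reduce to the coset-counting problem of Lemma~\ref{lem:layer A}, and the same verification that each $\tau_c\vert_N$ with $c\in E^a$ commutes with $\tau_a\circ\varphi\vert_N$ (the paper's computation uses exactly the relation $\varphi(c)=\varphi^a(c)a^{-1}ca$ with $\varphi^a(c)\in N$ that you describe). The only detail you compress is the justification of the factor $\#E^a$ between the class count and the coset count, which the paper carries out via two explicit surjections, but your claim there is exactly what those surjections establish.
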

\begin{proof}
	Fix a finite set $A\subset G$ of different representatives of the cosets of $N$ in $G$. We can assume without loss of generality that $1_G\in A$. Recall the definition of $i^a$ in Lemma \ref{lem:inclusion balls} and note that $i^a(N)=p^{-1}(aN)$, where $p:G\to G/N$ is the projection onto $G/N$. Define for any $a\in A$ (and any $r\in \R_{\geq 0}$) the sets
	\[ F(r):=\{[g]_{\varphi}\: \vert\: g\in B_G^S(r)\} \quad \text{and} \quad F_G^a(r):=\{[i^a(x)]_{\varphi}\: \vert\: x\in N,\: i^a(x)\in B_G^S(r)\}. \]
	Thus the set $F_G^a(r)$ precisely contains the twisted conjugacy classes of $\varphi$ that intersect the ball $B_G^S(r)$ in an element $g\in G$ with $p(g)=aN$. It easily follows that
	\[ F(r)= \bigcup_{a\in A} F_G^a(r) \]
	and thus
	\begin{equation}\label{eq:proof inequalities different layers}
		\max_{a\in A} \# F_G^a(r)\leq \#F(r)=f_R^S(r) \leq \sum_{a\in A} \# F_G^a(r).
	\end{equation}
	We now argue that $\# F_G^a(r)=\Theta(r^{\rank(N)-\rank(\Id_N-\tau_a\circ\varphi\vert_N)})$ as $r\to \infty$ for all $a\in A$. Recall from Lemma \ref{lem:virt ab finite union} that for all $x,y\in N$ it holds that
        \[i^a(x)\sim_{\varphi} i^a(y)\quad \Longleftrightarrow\quad x\in \bigcup_{c\in E^a} (\tau_c(y)-(\tau_a\circ\varphi^a)(c)+\Image(\Id_N-\tau_a\circ\varphi\vert_N))\]
        where $E^a$ and $\varphi^a$ are defined in Equation (\ref{eq:definition Ea}). Since for all $c\in E^a$ it holds that $\tau_c\in \Aut(N)$, the automorphism corresponds with multiplication on the left with a unique matrix $M_c\in \Gl_{\rank(N)}(\Z)$.
	We check the conditions of Lemma \ref{lem:layer A} with $B:=\Id_N-\tau_a\circ\varphi\vert_N\in \End(N)$. Note that $1_G\in E^a$, that $M_{1_G}=\mathds{1}_{\rank(N)}$ and that $(\tau_a\circ\varphi^a)(1_G)=1_G\in N$. Moreover, note that for any $c\in E^a$ the map $\tau_c\vert_N$ commutes with $\tau_a\circ\varphi\vert_N$. Indeed, fix any $x\in N$. Since $\varphi(c)=\varphi^a(c)a^{-1}ca$ with $\varphi^a(c)\in N$ it holds that
	\begin{align*}
			(\tau_a\circ \varphi\vert_N\circ\tau_c)(x)&=a\varphi(cxc^{-1})a^{-1}=a\varphi^a(c)a^{-1}ca\varphi(x)a^{-1}c^{-1}a\varphi^a(c)^{-1}a^{-1}\\
			&=(\tau_a\circ\varphi^a)(c)(\tau_c\circ \tau_a\circ \varphi\vert_N)(x)(\tau_a\circ\varphi^a)(c)^{-1}=(\tau_c\circ \tau_a\circ \varphi\vert_N)(x)
		\end{align*}
	where in the last step we used that the three elements belong to $N$, which is abelian. Hence, for any $c\in E^a$ the map $\tau_c\vert_N$ commutes with $\Id_N-\tau_a\circ\varphi\vert_N$ or equivalently the corresponding matrices commute. Fix a set $R\subset N$ of representatives of different cosets of $\Id_N-\tau_a\circ\varphi\vert_N$ in $N$ and fix any finite generating set $T$ of $N\cong \Z^{\rank(N)}$. Lemma \ref{lem:layer A} shows that if we define
	\begin{align*}
		V^a(r):&= \left\{x\in R\:\middle\vert\: \left( \bigcup_{c\in E^a} (\tau_c(x)-(\tau_a\circ\varphi^a)(c)+\Image(\Id_N-\tau_a\circ\varphi\vert_N)) \right)\cap B_{N}^{T}(r)\neq \emptyset\right\} \text{ then }\\
		\#V^a(r)&=\Theta(r^{\rank(N)-\rank(\Id_N-\tau_a\circ\varphi\vert_N)}) \text{ as } r\to \infty.
	\end{align*}
	Define the set
	\[ F_N^a(r):=\{[i^a(x)]_{\varphi}\: \vert\: x\in B_N^{T}(r)\}. \]
	By Lemma \ref{lem:inclusion balls} we know that there exists some $r_0^a\in \R_{\geq 0}$ and $c_1^a,c_2^a\in \R_{>0}$ such that for all $r\geq r_0^a$ it holds that
	\[ i^a(B_N^{T}(c_1^ar))\subset B_G^S(r)\cap i^a(N)\subset i^a(B_N^{T}(c_2^ar)). \]
	In particular, we have for all $r\geq r_0^a$ that
	\[ F_N^a(c_1^ar) \subset F_G^a(r) \subset F_N^a(c_2^ar). \]
	Now we argue that $\#V^a(r)/\# E^a \leq \# F_N^a(r) \leq \# V^a(r)$ (for all $r\in \R_{\geq 0}$) and thus for all $r\geq r_0^a$ we have that
	\begin{equation}\label{eq:proof RG inequalities}
		\frac{\#V^a(c_1^ar)}{\# E^a} \leq \# F_N^a(c_1^ar) \leq \# F_G^a(r) \leq \# F_N^a(c_2^ar) \leq \# V^a(c_2^ar).
	\end{equation}
	For this, fix any $r\in \R_{\geq 0}$ and representatives $\tilde{R}\subset i^a(B_N^{T}(r))$ of different twisted conjugacy classes from $F_N^a(r)$. Define maps
	\[ \pi_1:E^a\times \tilde{R}\to V^a(r) \text{ with }\]
    \[\pi_1(c,i^a(x)):= \text{representative in } R \text{ of } \tau_c(x)-(\tau_a\circ\varphi^a)(c)+\Image(\Id_N-\tau_a\circ\varphi\vert_N) \]
	and
	\[ \pi_2:V^a(r)\to \tilde{R} \text{ with } \pi_2(x):= \text{representative in } \tilde{R} \text{ of } [i^a(x)]_{\varphi}.  \]
        Note that $\pi_1$ and $\pi_2$ are well-defined. Indeed, take $(c,i^a(x))\in E^a\times \tilde{R}$ and denote with $y\in R$ the unique representative of $\tau_c(x)-(\tau_a\circ\varphi^a)(c)+\Image(\Id_N-\tau_a\circ\varphi\vert_N)$. Hence, by Lemma \ref{lem:virt ab finite union} we obtain that $i^a(y)\sim_{\varphi} i^a(x)$ and thus $x\in \left(\bigcup_{d\in E^a} (\tau_d(y)-(\tau_a\circ\varphi^a)(d)+\Image(\Id_N-\tau_a\circ\varphi\vert_N)) \right)\cap B_N^{T}(r)$. In particular, we have that $y\in V^a(r)$. For $\pi_2$, take $x\in V^a(r)$ and take some $y\in \left(\bigcup_{c\in E^a} (\tau_c(x)-(\tau_a\circ\varphi^a)(c)+\Image(\Id_N-\tau_a\circ\varphi\vert_N)) \right)\cap B_N^{T}(r)$. By Lemma \ref{lem:virt ab finite union} we get that  $i^a(x)\sim_{\varphi} i^a(y)$ with $y\in B_N^{T}(r)$ and thus $[i^a(x)]_{\varphi}\in F_N^a(r)$. So we can indeed consider the unique representative of $[i^a(x)]_{\varphi}$ in $\tilde{R}$.
	
	We argue that $\pi_1$ and $\pi_2$ are surjective. For this, take an arbitrary $x\in V^a(r)$ and fix some $y\in \left(\bigcup_{c\in E^a} (\tau_c(x)-(\tau_a\circ\varphi^a)(c)+\Image(\Id_N-\tau_a\circ\varphi\vert_N)) \right)\cap B_N^{T}(r)$. By Lemma \ref{lem:virt ab finite union} we get that  $i^a(x)\sim_{\varphi} i^a(y)$ with $y\in B_N^{T}(r)$ and thus $[i^a(x)]_{\varphi}\in F_N^a(r)$. Take $i^a(z)\in \tilde{R}$ the unique representative of $[i^a(x)]_{\varphi}$ in $\tilde{R}$. Thus by Lemma \ref{lem:virt ab finite union} there exists some $c\in E^a$ such that $x\in \tau_c(z)-(\tau_a\circ\varphi^a)(c)+\Image(\Id_N-\tau_a\circ\varphi\vert_N)$. Since $x\in V^a(r)\subset R$, it holds that $x\in R$ is the unique representative of $\tau_c(z)-(\tau_a\circ\varphi^a)(c)+\Image(\Id_N-\tau_a\circ\varphi\vert_N)$ in $R$. Hence, we find that $\pi_1(c,i^a(z))=x$ and thus $\pi_1$ is surjective. For $\pi_2$, take any $i^a(x)\in \tilde{R}$ arbitrary and denote with $y\in R$ the unique representative of $x+\Image(\Id_N-\tau_a\circ\varphi\vert_N)$. By Remark \ref{rem:virt ab finite union} we obtain that $x\in \left(\bigcup_{c\in E^a} (\tau_c(y)-(\tau_a\circ\varphi^a)(c)+\Image(\Id_N-\tau_a\circ\varphi\vert_N)) \right)\cap B_N^{T}(r)$ and thus $y\in V^a(r)$. Moreover, by Lemma \ref{lem:virt ab finite union} we get that $i^a(x)\sim_{\varphi} i^a(y)$ and thus $i^a(x)\in \tilde{R}$ is the unique representative of $[i^a(y)]_{\varphi}$ in $\tilde{R}$. Thus, it follows that $\pi_2(y)=i^a(x)$. Hence, also $\pi_2$ is surjective.
	
	Since $\pi_1$ and $\pi_2$ are surjective (for any $r\in \R_{\geq 0}$), we obtain that $\#V^a(r)/\# E^a \leq \# F_N^a(r) \leq \# V^a(r)$ for all $r\in \R_{\geq 0}$. In particular, we can conclude the inequalities in Equation (\ref{eq:proof RG inequalities}) for all $r\geq r_0^a$. Using that $\# V^a(r)=\Theta(r^{\rank(N)-\rank(\Id_N-\tau_a\circ\varphi\vert_N)})$ as $r\to \infty$, it follows that
	\[ \# F_G^a(r)=\Theta\left(r^{\rank(N)-\rank(\Id_N-\tau_a\circ\varphi\vert_N)}\right) \text{ as } r\to \infty. \]
	Since $a\in A$ was taken arbitrary, this holds for all $a\in A$. Using Equation (\ref{eq:proof inequalities different layers}) and the definition of $\Theta$ (see Definition \ref{def:big Theta}) it now easily follows that
	\[ f_R^S(r)=\# F(r)=\Theta\left(r^{\rank(N)-\min\limits_{a\in A}\rank(\Id_N-\tau_a\circ\varphi\vert_N)}\right) \text{ as } r\to \infty. \]
	Since $N\subset_{\text{fin}} G$ and $\beta_N^{T}(r)=\Theta(r^{\rank(N)})$ as $r\to\infty$ (see for example \cite[Example 6.1.2]{loh17}), it follows (by for example \cite[Corollary 5.4.5, Proposition 6.2.4]{loh17}) that $\beta_G^S(r)=\Theta(r^{\rank(N)})$ as $r\to\infty$. In particular, by Corollary \ref{cor:twisted conjugacy growth independent generating set} it follows that
	\[ \beta_G^S(r)=\Theta\left(r^{\rank(N)}\right) \quad \text{and}\quad f_R^S(r)=\Theta\left(r^{\rank(N)-\min\limits_{a\in A}\rank(\Id_N-\tau_a\circ\varphi\vert_N)}\right) \text{ as } r\to \infty \]
	for all finite generating sets $S$ of $G$ and the twisted conjugacy growth of $G$ equals
	\[ 1-\frac{\min\limits_{a\in A}\rank(\Id_N-\tau_a\circ\varphi\vert_N)}{\rank(N)} \]
	for all finite generating sets $S$ of $G$.
\end{proof}

%%%%%%%%%%%%%%%%%%%%%%%%%%%%%%%%%%%%%%%%%%%%%%%%%%%%%%%%%%%%%%%%%%%%%%%%%%%%%%%%%%%%%%%%%%%%%%%%%%%%%%%%%%%%%%%%%%%%%%%%%%%%%%%%%
%%%%%%%%%%%%%%%%%%%%%%%%%%%%%%%%%%%%%%%%%%%%%%Twisted conjugacy class growth%%%%%%%%%%%%%%%%%%%%%%%%%%%%%%%%%%%%%%%%%%%%%%%%%%%%%
%%%%%%%%%%%%%%%%%%%%%%%%%%%%%%%%%%%%%%%%%%%%%%%%%%%%%%%%%%%%%%%%%%%%%%%%%%%%%%%%%%%%%%%%%%%%%%%%%%%%%%%%%%%%%%%%%%%%%%%%%%%%%%%%%

\section{Twisted conjugacy class growth}\label{sec:tc class growth}
Instead of studying the number of twisted conjugacy classes intersecting the balls $B_G^S(r)$, one could also be interested in the growth of one particular twisted conjugacy class $[g]_{\varphi}$. More precisely, for any subset $U\subset G$ we can define the \textit{subset growth} of $U$ in $G$ (with respect to $S$) as
\[ \beta_{U \subset G}^S(r):=\# B_{U\subset G}^S(r):=\# \{u\in U\:\vert \: \norm{u}_S\leq r\}. \]
For any $\varphi\in \End(G)$ and any $g\in G$, the \textit{twisted conjugacy class growth} of the twisted conjugacy class $[g]_{\varphi}$ equals $\beta_{[g]_{\varphi}\subset G}^S$.

Dermenjian and Evetts proved (see \cite{de23}) that any conjugacy class of a finitely generated virtually abelian group grows polynomially. Using Lemma \ref{lem:virt ab finite union}, we can prove the analogous result for all twisted conjugacy classes and determine the degree of this polynomial.
\begin{theorem}\label{thm:twisted conjugacy class growth}
    Let $S$ be any finite generating set of a finitely generated virtually abelian group $G$ and $N\lhd_{\text{fin}}G$ a torsion-free abelian subgroup of $G$. Let $\varphi\in \End(G)$ with $\varphi(N)\subset N$, then any twisted conjugacy class grows polynomially. More precisely, for any $g_0\in G$ it holds that
    \[\beta_{[g_0]_{\varphi}\subset G}^S(r)=\Theta(r^k) \text{ as }r\to \infty\quad \text{where}\]
    \[k=\max\{\rank(\Id_N-\tau_g\circ\varphi\vert_N)\:\vert\: gN\in G/N \text{ and } [g_0]_{\varphi}\cap p^{-1}(gN)\neq \emptyset\}.\]
\end{theorem}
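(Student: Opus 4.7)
The plan is to partition $[g_0]_{\varphi}\cap B_G^S(r)$ according to which $N$-coset each element lies in, reduce the count on each piece to a lattice-point count in $N\cong \Z^{\rank(N)}$ via Lemma \ref{lem:inclusion balls}, and then apply Lemma \ref{lem:virt ab finite union} to identify the admissible lattice points with a finite union of cosets of $L_a:=\Image(\Id_N-\tau_a\circ\varphi\vert_N)$.

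More precisely, fix a finite set $A\subset G$ of representatives of $G/N$ with $1_G\in A$, and let $A_0:=\{a\in A\,:\,[g_0]_{\varphi}\cap p^{-1}(aN)\neq \emptyset\}$. For each $a\in A_0$, pick a representative $g_a=i^a(y_a)\in [g_0]_{\varphi}\cap p^{-1}(aN)$ with $y_a\in N$, and set $m_a:=\rank(L_a)$. By Lemma \ref{lem:virt ab finite union}, the set of $x\in N$ with $i^a(x)\sim_{\varphi} i^a(y_a)$ is the finite union of cosets
\[ U^a:=\bigcup_{c\in E^a}\bigl(\tau_c(y_a)-(\tau_a\circ\varphi^a)(c)+L_a\bigr). \]
Since $i^a$ is injective and Lemma \ref{lem:inclusion balls} provides constants $c_1^a,c_2^a,r_0^a>0$ such that $i^a(B_N^T(c_1^a r))\subset B_G^S(r)\cap i^a(N)\subset i^a(B_N^T(c_2^a r))$ for all $r\geq r_0^a$ (where $T$ is any fixed finite generating set of $N$), the contribution of the coset $aN$ to $\beta_{[g_0]_{\varphi}\subset G}^S(r)$ is sandwiched between $|U^a\cap B_N^T(c_1^a r)|$ and $|U^a\cap B_N^T(c_2^a r)|$.

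It remains to show that $|U^a\cap B_N^T(r)|=\Theta(r^{m_a})$. For a single nonempty translate $y+L_a\subset N$, choosing a $\Z$-basis $v_1,\dots,v_{m_a}$ of $L_a$ identifies $y+L_a$ bi-Lipschitz with $\Z^{m_a}$ under its standard norm (two norms on the real span $L_a\otimes \R$ are equivalent, and the shift by $y$ is absorbed by the triangle inequality for $r$ large), so the classical estimate $\beta_{\Z^{m_a}}^{\varepsilon}(r)=\Theta(r^{m_a})$ already invoked in Lemma \ref{lem:V_Id^S(r)} gives $|(y+L_a)\cap B_N^T(r)|=\Theta(r^{m_a})$. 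As $U^a$ is a finite union of such translates, the same asymptotic holds for $U^a$, and summing over the finite set $A_0$ produces $\beta_{[g_0]_{\varphi}\subset G}^S(r)=\Theta(r^{\max_{a\in A_0} m_a})$, which is exactly the stated formula. Independence of the generating set $S$ follows from Lemma \ref{lem:V(r) independent of generating set} applied with $R=[g_0]_{\varphi}$ and $L(u)=\{u\}$. The main technical step is the lattice-point count on a single translate $y+L_a$; once the bi-Lipschitz identification with $\Z^{m_a}$ is set up, the rest reduces to bookkeeping and to Lemma \ref{lem:virt ab finite union}.
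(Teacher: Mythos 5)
Your proposal is correct and follows essentially the same route as the paper: decompose $[g_0]_{\varphi}$ over the $N$-cosets it meets, use Lemma \ref{lem:inclusion balls} to pass to balls in $N$, and use Lemma \ref{lem:virt ab finite union} to identify each piece with a finite union of translates of $\Image(\Id_N-\tau_a\circ\varphi\vert_N)$. The only difference is that the paper outsources the remaining subset-growth bookkeeping (translates, finite unions, quasi-isometry invariance) to Lemmas 2.4, 2.5 and Proposition 2.6 of \cite{de23} and converts quasi-equivalence to $\Theta$ at the end via Lemma \ref{lem:asymptotics growth independent}, whereas you perform the lattice-point count on each translate directly via the bi-Lipschitz identification with $\Z^{m_a}$; both are valid.
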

\begin{proof}
    As before, fix a finite set $A\subset G$, including $1_G$, of representatives of $G/N$. Fix some $g_0\in G$ and define $A_0\subset A$ by
    \[A_0:=\{a\in A\:\vert\: [g_0]_{\varphi}\cap p^{-1}(aN)\neq \emptyset\}\]
    which contains the representatives of all cosets that intersect the twisted conjugacy class of $g_0$. For any $a\in A_0$ we choose some $x_a\in N$ such that $x_a a\sim_{\varphi} g_0$. Hence, it holds that
    \begin{align*}
        [g_0]_{\varphi}&=\bigcup_{a\in A_0}[g_0]_{\varphi}\cap p^{-1}(aN)\\
        &=\bigcup_{a\in A_0}i^a\left( \bigcup_{c\in E^a} (\tau_c(x_a)-(\tau_a\circ\varphi^a)(c)+\Image(\Id_N-\tau_a\circ\varphi\vert_N))\right)
    \end{align*}
    where in the last step we used Lemma \ref{lem:virt ab finite union}. Denote for simplicity (for any $a\in A_0$)
    \[V^a:=\bigcup_{c\in E^a} (\tau_c(x_a)-(\tau_a\circ\varphi^a)(c)+\Image(\Id_N-\tau_a\circ\varphi\vert_N)).\]
    Since $A_0$ is finite, it follows by Lemma 2.5 in \cite{de23} that $\beta_{[g_0]_{\varphi}\subset G}^S\sim \max\limits_{a\in A_0} \beta_{i^a(V^a)\subset G}^S$. Recall that $i^a:N\to G:x\mapsto xa$ is a quasi-isometry (see Lemma \ref{lem:inclusion balls}). Hence, Proposition 2.6 in \cite{de23} yields for any $a\in A$ that $\beta_{i^a(V^a)\subset G}^S\sim \beta_{V^a\subset \Z^n}^T$ (where $T$ is any finite generating set of $\Z^n$). Using Lemma 2.4 in \cite{de23} we obtain for any $a\in A_0$ and $c\in E^a$ that
    \begin{align*}
        \beta_{\tau_c(x_a)-(\tau_a\circ\varphi^a)(c)+\Image(\Id_N-\tau_a\circ\varphi\vert_N)\subset \Z^n}^T&\sim \beta_{\Image(\Id_N-\tau_a\circ\varphi\vert_N)\subset \Z^n}^T\\
        &\sim (r\mapsto r^{\rank(\Id_N-\tau_a\circ\varphi\vert_N)})
    \end{align*}
    where we used that $\Image(\Id_N-\tau_a\circ\varphi\vert_N)\cong \Z^{\rank(\Id_N-\tau_a\circ\varphi\vert_N)}$ grows polynomially of degree its rank. Applying Lemma 2.5 from \cite{de23} again and combining all the rest, we can conclude that
    \begin{align*}
        \beta_{[g_0]_{\varphi}\subset G}^S(r)&\sim \max\limits_{a\in A_0} (r\mapsto r^{\rank(\Id_N-\tau_a\circ\varphi\vert_N)})\\
        &\sim (r\mapsto r^{\max\limits_{a\in A_0}\rank(\Id_N-\tau_a\circ\varphi\vert_N)}).
    \end{align*}
    Lemma \ref{lem:asymptotics growth independent} now gives us the desired result.
\end{proof}

%%%%%%%%%%%%%%%%%%%%%%%%%%%%%%%%%%%%%%%%%%%%%%%%%%%%%%%%%%%%%%%%%%%%%%%%%%%%%%%%%%%%%%%%%%%%%%%%%%%%%%%%%%%%%%%%%%%%%%%%%%%%%%%%%
%%%%%%%%%%%%%%%%%%%%%%%%%%%%%%%%%%%%%%%%%%%Twisted conjugacy quotient growth%%%%%%%%%%%%%%%%%%%%%%%%%%%%%%%%%%%%%%%%%%%%%%%%%%%%%
%%%%%%%%%%%%%%%%%%%%%%%%%%%%%%%%%%%%%%%%%%%%%%%%%%%%%%%%%%%%%%%%%%%%%%%%%%%%%%%%%%%%%%%%%%%%%%%%%%%%%%%%%%%%%%%%%%%%%%%%%%%%%%%%%

\section{Twisted conjugacy quotient growth}\label{sec:quotient growth}
Let $G$ be a finitely generated virtually abelian group with $\Z^n\lhd_{\text{fin}} G$. Note that in this section we use $\Z^n$ instead of $N$. Fix an endomorphism $\varphi\in \End(G)$ with $\varphi(\Z^n)\subset \Z^n$ and denote for any $k\in \N_0$ with $\olvarphi_k$ the induced morphism on $G/(k\Z)^n$. Since $G/(k\Z)^n$ is finite, it follows that $R(\olvarphi_k)<\infty$. We define the \textit{twisted conjugacy quotient growth function $f_Q$} as
\[f_Q:\N_0\to \N_0:k\mapsto R(\olvarphi_k).\]
We prove a slight modification of Lemma \ref{lem:virt ab finite union} where we use the set-up from Notation \ref{not:set-up and definition Ea}.
\begin{lemma}\label{lem:virt ab finite union quotient}
    It holds for any $a\in A$ and for any $x,y\in \Z^n$ that
    \[i^a(x)(k\Z)^n\sim_{\olvarphi_k} i^a(y)(k\Z)^n\quad \Longleftrightarrow\quad x\in \bigcup_{c\in E^a} (\tau_c(y)-(\tau_a\circ\varphi^a)(c)+(\Image(\Id_{\Z^n}-\tau_a\circ\varphi\vert_{\Z^n})+(k\Z)^n)).\]
\end{lemma}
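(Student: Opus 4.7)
The plan is to mirror the proof of Lemma~\ref{lem:virt ab finite union} step by step, treating the quotient map $G\to G/(k\Z)^n$ as introducing a single extra additive term $w\in(k\Z)^n$ at the end of the word-equation which drives the argument. Since $(k\Z)^n$ is a characteristic subgroup of $\Z^n$ and $\Z^n\lhd G$, the quotient $G/(k\Z)^n$ is a well-defined group and $\varphi$ descends to $\olvarphi_k\in\End(G/(k\Z)^n)$, so the definitions make sense. Throughout I would retain the notation $E^a$ and $\varphi^a$ from Notation~\ref{not:set-up and definition Ea} unchanged; nothing about them depends on $k$.

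For the forward direction, I would first unpack the $\olvarphi_k$-twisted conjugacy in the quotient by lifting representatives: $i^a(x)(k\Z)^n\sim_{\olvarphi_k}i^a(y)(k\Z)^n$ is equivalent to the existence of some $z\in G$ and $w\in(k\Z)^n$ with
\[
i^a(x)=z\, i^a(y)\,\varphi(z)^{-1}\, w.
\]
Writing $z=i^c(z')$ for a unique $c\in A$ and $z'\in\Z^n$ (using that $A$ is a transversal of $G/\Z^n$), and applying $p:G\to G/\Z^n$ to both sides, the factor $w$ disappears (since $w\in\Z^n$) and I recover exactly the same constraint $p(a)=p(ca\varphi(c)^{-1})$ as in the proof of Lemma~\ref{lem:virt ab finite union}, namely $c\in E^a$. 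Expanding $\varphi(c)=\varphi^a(c)a^{-1}ca$ and switching to additive notation in the abelian group $N=\Z^n$, the displayed equation becomes
\[
x=\tau_c(y)-(\tau_a\circ\varphi^a)(c)+(\Id_{\Z^n}-\tau_a\circ\varphi\vert_{\Z^n})(z')+w,
\]
which places $x$ in the desired union.

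The converse is just reading each of these steps backwards: given $c\in E^a$, $z'\in\Z^n$ and $w\in(k\Z)^n$ satisfying the additive equation above, the calculations from Lemma~\ref{lem:virt ab finite union} show that $z:=i^c(z')$ satisfies $i^a(x)=z\,i^a(y)\,\varphi(z)^{-1}\,w$, and projecting into $G/(k\Z)^n$ yields the desired twisted conjugacy of cosets. The only real obstacle, which is genuinely minor, is bookkeeping: one must be careful that the single element $w\in(k\Z)^n$ is tracked through each manipulation and only appears once (at the end of the lifted equation), so that in the additive formulation it sits inside $(k\Z)^n$ rather than contaminating the $E^a$-computation. Because $w\in\Z^n$ and the projection to $G/\Z^n$ kills it, the classification of $c$ is unchanged from the $k=\infty$ case, and the whole argument goes through verbatim.
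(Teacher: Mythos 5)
Your proposal is correct and follows essentially the same route as the paper: both lift the twisted conjugacy relation in $G/(k\Z)^n$ to an equation in $G$ with an extra factor from $(k\Z)^n$, decompose the conjugator as $i^c(z')$ with $c\in A$, project to $G/\Z^n$ (killing the $(k\Z)^n$-term) to force $c\in E^a$, and then repeat the additive computation of Lemma \ref{lem:virt ab finite union} with the extra summand $w\in(k\Z)^n$ carried along. The paper phrases this as a chain of equivalences with coset membership rather than an explicit element $w$, but the content is identical.
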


\begin{proof}
    Fix any $a\in A$ and $x,y\in \Z^n$. Hence, it follows that
    \begin{align*}
		i^a(x)(k\Z)^n\sim_{\olvarphi_k} i^a(y)(k\Z)^n&\Longleftrightarrow \exists z\in \Z^n,\: c\in A:\: i^a(x)\in i^c(z)i^a(y)\varphi(i^c(z))^{-1}(k\Z)^n\\
		&\Longleftrightarrow \exists z\in \Z^n,\: c\in A:\: i^a(x)\in i^c(z)i^a(y)\varphi(c)^{-1}\varphi(z)^{-1}(k\Z)^n.
	\end{align*}
        Since $\varphi(\Z^n)\subset \Z^n$, by projecting onto $G/\Z^n$ we obtain that $a\Z^n=ca\varphi(c)^{-1}\Z^n$ and thus $c\in E^a$. Using the definition of $\varphi^a$ (see Equation (\ref{eq:definition Ea})) we get that
	\begin{align*}
		i^a(x)(k\Z)^n\sim_{\olvarphi_k} i^a(y)(k\Z)^n&\Longleftrightarrow \exists z\in \Z^n,\: c\in E^a:\: i^a(x)\in i^c(z)i^a(y)(\varphi^a(c)a^{-1}ca)^{-1}\varphi(z)^{-1}(k\Z)^n\\							  
		&\Longleftrightarrow \exists z\in \Z^n,\: c\in E^a:\: x\in z\tau_c(y)(\tau_a\circ\varphi^a)(c)^{-1}(\tau_a\circ\varphi)(z)^{-1}(k\Z)^n\\
		&\Longleftrightarrow \exists z\in \Z^n,\: c\in E^a:\: x\in \tau_c(y)-(\tau_a\circ\varphi^a)(c)+(\Id_{\Z^n}-\tau_a\circ\varphi\vert_{\Z^n})(z)+(k\Z)^n
	\end{align*}
	where in the last step we used that $\varphi^a(E^a)\subset \Z^n$, that $\Z^n$ is abelian and we switched to additive notation for convenience. Thus it follows that for all $x,y\in \Z^n$
		\[i^a(x)(k\Z)^n\sim_{\olvarphi_k} i^a(y)(k\Z)^n\quad \Longleftrightarrow\quad x\in \bigcup_{c\in E^a} (\tau_c(y)-(\tau_a\circ\varphi^a)(c)+(\Image(\Id_{\Z^n}-\tau_a\circ\varphi\vert_{\Z^n})+(k\Z)^n)).\]
\end{proof}
In order to determine the Reidemeister number of $\olvarphi_k$, we need the number of cosets of $\Image(B)+(k\Z)^n$ in $\Z^n$ for any morphism $B\in \End(\Z^n)$. For any subgroup $H\subset \Z^n$ let us denote with $\sqrt{H}:=\{z\in \Z^n\:\vert\: \exists m\in \N_0:\: mz\in H\}$ the isolator subgroup of $H$. In particular, it holds that $\sqrt{H}/H$ is finite. The next lemma is needed to determine $[\Z^n:H+(k\Z)^n]$.
\begin{lemma}\label{lem:order v_r}
    Let $0 \neq H\subset \Z^n$ and $k,l\in \N_0$. Denote $r:=\rank(H)$ and fix any $\Z$-basis $\{v_1,\dots,v_r\}$ of $H$. If $lv_r\in \langle v_1,\dots,v_{r-1}\rangle + (k\Z)^n$ (or $lv_1\in (k\Z)^n$ if $r=1$), then $k\leq \left\vert\frac{\sqrt{H}}{H}\right\vert l$.
\end{lemma}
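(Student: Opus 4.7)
My plan is to unpack the hypothesis into an explicit equation in $\Z^n$ and then exploit the fact that multiplication by $m := \left\vert \sqrt{H}/H\right\vert$ annihilates $\sqrt{H}$ modulo $H$. Writing out membership in $\langle v_1,\dots,v_{r-1}\rangle + (k\Z)^n$ gives integers $\beta_1,\dots,\beta_{r-1}$ and a vector $z \in \Z^n$ with
\[ lv_r = \beta_1 v_1 + \dots + \beta_{r-1}v_{r-1} + kz \]
(and analogously $lv_1 = kz$ when $r = 1$). Rearranging shows $kz \in H$, so by the definition of the isolator $z \in \sqrt{H}$.

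Since $\sqrt{H}/H$ is finite of order $m$, its exponent divides $m$, hence $mz \in H$, and $mz$ admits a unique expansion $mz = \gamma_1 v_1 + \dots + \gamma_r v_r$ in the chosen basis. The next step is to multiply the displayed identity through by $m$, which puts $kmz$ on the left and $ml v_r$ plus an element of $\langle v_1, \dots, v_{r-1}\rangle$ on the right; expanding $kmz = k\gamma_1 v_1 + \dots + k\gamma_r v_r$ and invoking uniqueness of coefficients in the $\Z$-basis $\{v_1, \dots, v_r\}$ of $H$, the $v_r$-components on both sides must agree, yielding $k\gamma_r = ml$.

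From $k\gamma_r = ml$ together with $l \geq 1$, we see $\gamma_r \geq 1$, and the claimed bound
\[ k \leq k\gamma_r = ml = \left\vert \sqrt{H}/H\right\vert l \]
follows. I expect the one point needing care to be this uniqueness-of-coefficients step: it relies crucially on $\{v_1, \dots, v_r\}$ being a $\Z$-basis of $H$ (not merely a generating set), so that the $v_r$-component of an element of $H$ is well defined and the coefficients on both sides of the multiplied-through identity can be compared term by term. Beyond pinning down this uniqueness, the argument is a direct unwinding of definitions, and I do not foresee any further obstacle.
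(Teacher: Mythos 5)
Your proof is correct and follows essentially the same route as the paper's: unpack the membership to get $kz\in H$, hence $z\in\sqrt{H}$, use finiteness of $\sqrt{H}/H$ to put a controlled multiple of $z$ back into $H$, and compare $v_r$-coefficients in the $\Z$-basis. The only cosmetic difference is that you multiply through by $m=\left\vert\sqrt{H}/H\right\vert$ and use that the exponent divides the order, whereas the paper works with the exact order $d$ of $z+H$ and the divisibility $d\mid k$; both yield $k\leq ml$.
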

\begin{proof}
    For $r=1$ the proof is completely analogous and thus we assume that $r\geq 2$. 
    Denote
    \[lv_r=\sum_{i=1}^{r-1} \lambda_i v_i +kz\]
    where $z,\lambda_i\in \Z$. In particular, we obtain that $kz\in H$ and thus $z\in \sqrt{H}$. Denote with $d$ the order of $z+H$ in $\sqrt{H}/H$. Thus $dz\in H$ and hence there are unique $\mu_i\in \Z$ such that
    \[dz=\sum_{i=1}^{r} \mu_i v_i.\]
    Since $kz\in H$, it follows that $d\mid k$. Hence, we obtain that
    \[\sum_{i=1}^{r} \frac{k}{d}\mu_i v_i=\frac{k}{d}dz=kz=lv_r-\sum_{i=1}^{r-1} \lambda_i v_i.\]
    Using that $\{v_1,\dots,v_r\}$ is a $\Z$-basis of $H$ we get that $\frac{k}{d}\mu_r=l$ and thus $k\leq dl$. Since $d\leq \left\vert\frac{\sqrt{H}}{H}\right\vert$ the result follows.
\end{proof}
\begin{theorem}\label{thm:index subgroup Z^n}
    Let $H\subset \Z^n$, then
    \[[\Z^n:H+(k\Z)^n]=\Theta(k^{n-\rank(H)}) \text{ as } k\to \infty\]
\end{theorem}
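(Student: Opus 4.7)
The plan is to build up $H+(k\Z)^n$ one basis element at a time and analyze the resulting chain. Fix a $\Z$-basis $v_1,\dots,v_r$ of $H$ (with $r=\rank(H)$), and set $H_i:=\langle v_1,\dots,v_i\rangle$ for $0\le i\le r$. Consider the filtration
\[
(k\Z)^n = H_0+(k\Z)^n \subset H_1+(k\Z)^n \subset \dots \subset H_r+(k\Z)^n = H+(k\Z)^n.
\]
For each $i$, the quotient $(H_i+(k\Z)^n)/(H_{i-1}+(k\Z)^n)$ is cyclic, generated by the image of $v_i$; denote its order by $m_i$. Since $[\Z^n:(k\Z)^n]=k^n$, multiplicativity of the index yields
\[
[\Z^n:H+(k\Z)^n] = \frac{k^n}{m_1 m_2 \cdots m_r},
\]
so it suffices to prove $m_i=\Theta(k)$ as $k\to\infty$ for each $i$.

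The upper bound $m_i\le k$ is immediate from $kv_i\in(k\Z)^n\subset H_{i-1}+(k\Z)^n$, which already gives $[\Z^n:H+(k\Z)^n]\ge k^{n-r}$. For the lower bound on $m_i$ I will invoke Lemma \ref{lem:order v_r} applied to the rank-$i$ subgroup $H_i$ with its $\Z$-basis $v_1,\dots,v_i$: by definition of $m_i$ we have $m_i v_i\in \langle v_1,\dots,v_{i-1}\rangle+(k\Z)^n$, which is exactly the hypothesis of the lemma, and the conclusion reads
\[
k \le \left\vert\frac{\sqrt{H_i}}{H_i}\right\vert \cdot m_i.
\]
Setting $C:=\prod_{i=1}^{r}\left\vert\sqrt{H_i}/H_i\right\vert$, a finite constant independent of $k$, this gives $m_1\cdots m_r \ge k^r/C$, hence $[\Z^n:H+(k\Z)^n]\le C\cdot k^{n-r}$.

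Combining the two bounds yields $[\Z^n:H+(k\Z)^n]=\Theta(k^{n-r})$ as $k\to\infty$; the degenerate case $r=0$ (i.e.\ $H=0$) is covered directly by $[\Z^n:(k\Z)^n]=k^n$. The only real work is the lower bound on $m_i$, and that is precisely the content of Lemma \ref{lem:order v_r} applied at each stage of the filtration; the rest is a routine index computation. The main conceptual point to verify is that $\sqrt{H_i}/H_i$ is finite for every $i$, which is standard since $H_i$ and $\sqrt{H_i}$ have the same rank as subgroups of $\Z^n$, so the constant $C$ is well-defined.
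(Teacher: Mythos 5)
Your proof is correct and is essentially the paper's argument: the paper proves $[H+(k\Z)^n:(k\Z)^n]=\Theta(k^{\rank(H)})$ by induction on the rank, peeling off one basis vector at a time and using Lemma \ref{lem:order v_r} to trap the order of the cyclic quotient $(H+(k\Z)^n)/(\langle v_1,\dots,v_{r-1}\rangle+(k\Z)^n)$ between $k/\vert\sqrt{H}/H\vert$ and $k$, which is exactly your bound on $m_r$. Unrolling that induction into your explicit filtration with constant $C=\prod_i\vert\sqrt{H_i}/H_i\vert$ is only a cosmetic repackaging.
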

\begin{proof}
    Note that
    \[[\Z^n:H+(k\Z)^n]=\frac{[\Z^n:(k\Z)^n]}{[ H+(k\Z)^n:(k\Z)^n]}=\frac{k^n}{[H+(k\Z)^n:(k\Z)^n]}\]
    and thus it suffices to argue that $[H+(k\Z)^n:(k\Z)^n]=\Theta(k^{\rank(H)})$ as $k\to \infty$. We argue this by using induction on the rank of $H$. If $\rank(H)=0$, then the result holds trivially. Let us assume that $\rank(H)\in \N_0$ and the result holds for any subgroup of rank less than $\rank(H)$. Denote $r:=\rank(H)$ and fix any $\Z$-basis $\{v_1,\dots,v_r\}$ of $H$. To simplify notation, $\langle v_1,\dots,v_{r-1}\rangle:=0$ if $r=1$. Since the rank of the subgroup $\langle v_1,\dots,v_{r-1}\rangle$ equals $r-1$, it follows by the induction hypothesis that
    \[[\langle v_1,\dots,v_{r-1}\rangle+(k\Z)^n:(k\Z)^n]=\Theta(k^{r-1})\text{ as } k\to \infty.\]
    Fix any $k>\left\vert\frac{\sqrt{H}}{H}\right\vert$. Thus by Lemma \ref{lem:order v_r} it holds that $v_r\not\in \langle v_1,\dots,v_{r-1}\rangle+(k\Z)^n$. Hence, $v_r+\langle v_1,\dots,v_{r-1}\rangle+(k\Z)^n$ is a generator of the finite cyclic group $(H+(k\Z)^n)/(\langle v_1,\dots,v_{r-1}\rangle+(k\Z)^n)$ and the size of this group thus equals the order of $v_r+\langle v_1,\dots,v_{r-1}\rangle+(k\Z)^n$. Using Lemma \ref{lem:order v_r} we obtain that
    \[\frac{k}{\left\vert\frac{\sqrt{H}}{H}\right\vert} \leq [H+(k\Z)^n:\langle v_1,\dots,v_{r-1}\rangle+(k\Z)^n] \leq k.\]
    Since this holds for all $k>\left\vert\frac{\sqrt{H}}{H}\right\vert$, it follows that
    \[[H+(k\Z)^n:\langle v_1,\dots,v_{r-1}\rangle+(k\Z)^n]=\Theta(k) \text{ as } k\to \infty\]
    and thus
    \begin{align*}
        [H+(k\Z)^n:(k\Z)^n]&=[H+(k\Z)^n:\langle v_1,\dots,v_{r-1}\rangle+(k\Z)^n] \cdot [\langle v_1,\dots,v_{r-1}\rangle+(k\Z)^n:(k\Z)^n]\\
        &=\Theta(k^{\rank(H)}) \text{ as } k\to \infty
    \end{align*}
    which concludes the proof by induction.
\end{proof}

\begin{theorem}\label{thm:quotient growth}
    If $G$ is a finitely generated virtually abelian group with $\Z^n\lhd_{\text{fin}} G$ and $\varphi\in \End(G)$ with $\varphi(\Z^n)\subset \Z^n$, then
    \[f_Q(k)=\Theta\left(k^{n-\min\limits_{g\Z^n\in G/\Z^n}\rank(\Id_{\Z^n}-\tau_g\circ\varphi\vert_{\Z^n})}\right) \text{ as } k\to \infty.\]
\end{theorem}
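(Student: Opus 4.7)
The plan is to mirror the structure of the proof of Theorem \ref{thm:twisted conjugacy growth virtually abelian} in the finite quotient setting, with Theorem \ref{thm:index subgroup Z^n} playing the role that Lemma \ref{lem:V_Id^S(r)} played there. First I would fix a finite set $A \subset G$ of representatives of $G/\Z^n$ containing $1_G$, as in Notation \ref{not:set-up and definition Ea}. For each $a \in A$, let $F^a(k)$ denote the set of $\olvarphi_k$-twisted conjugacy classes of $G/(k\Z)^n$ having at least one representative of the form $i^a(x)(k\Z)^n$ with $x \in \Z^n$. Since every element of $G/(k\Z)^n$ has such a representative for some $a \in A$, the elementary sandwich
\[\max_{a \in A} \#F^a(k) \;\leq\; f_Q(k) = R(\olvarphi_k) \;\leq\; \sum_{a \in A} \#F^a(k)\]
reduces the problem to estimating each $\#F^a(k)$ individually.

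Next I would apply Lemma \ref{lem:virt ab finite union quotient} to count $\#F^a(k)$. Set $H_a := \Image(\Id_{\Z^n} - \tau_a\circ\varphi\vert_{\Z^n}) + (k\Z)^n$. That lemma says that $i^a(x)(k\Z)^n \sim_{\olvarphi_k} i^a(y)(k\Z)^n$ precisely when $x \in \bigcup_{c \in E^a}\bigl(\tau_c(y) - (\tau_a\circ\varphi^a)(c) + H_a\bigr)$. The commutation identity $\tau_c \circ (\tau_a\circ\varphi\vert_{\Z^n}) = (\tau_a\circ\varphi\vert_{\Z^n}) \circ \tau_c$ for $c \in E^a$ (already established in the proof of Theorem \ref{thm:twisted conjugacy growth virtually abelian}) implies that $\tau_c$ preserves $\Image(\Id_{\Z^n} - \tau_a\circ\varphi\vert_{\Z^n})$, and obviously it preserves $(k\Z)^n$, so it preserves $H_a$. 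Hence each $\olvarphi_k$-class, viewed inside $\Z^n/(k\Z)^n$ via $i^a$, is a union of at most $\#E^a$ cosets of $H_a/(k\Z)^n$ in $\Z^n/(k\Z)^n$, which gives
\[\frac{[\Z^n : H_a]}{\#E^a} \;\leq\; \#F^a(k) \;\leq\; [\Z^n : H_a].\]

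Theorem \ref{thm:index subgroup Z^n} applied to the subgroup $\Image(\Id_{\Z^n} - \tau_a\circ\varphi\vert_{\Z^n}) \subset \Z^n$ then yields $[\Z^n : H_a] = \Theta\bigl(k^{n - \rank(\Id_{\Z^n} - \tau_a\circ\varphi\vert_{\Z^n})}\bigr)$ as $k \to \infty$. Since $A$ and each $E^a$ are finite and independent of $k$, combining this with the sandwich above (and using that a finite sum of $\Theta(k^{d_i})$ is $\Theta(k^{\max d_i})$) delivers
\[f_Q(k) = \Theta\left(k^{\,n \,-\, \min_{a \in A} \rank(\Id_{\Z^n} - \tau_a\circ\varphi\vert_{\Z^n})}\right).\]
Finally, because $\Z^n$ is abelian one has $\tau_{an}\vert_{\Z^n} = \tau_a\vert_{\Z^n}$ for every $n \in \Z^n$, so this rank depends only on the coset $g\Z^n$, and the minimum over $a \in A$ coincides with the minimum over $g\Z^n \in G/\Z^n$.

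The main obstacle is the bookkeeping in the middle step: one must verify that the $\olvarphi_k$-equivalence classes inside $\Z^n/(k\Z)^n$ really are unions of cosets of one fixed finite-index subgroup $H_a/(k\Z)^n$ with multiplicity bounded uniformly in $k$. This in turn relies on $\tau_c$ preserving $H_a$, which is the one nontrivial input; everything else is routine arithmetic of indices in $\Z^n$ plus the already-proved Theorem \ref{thm:index subgroup Z^n}.
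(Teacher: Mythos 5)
Your proposal is correct and follows essentially the same route as the paper: the same decomposition into layers indexed by $A$ with the sandwich $\max_a \#F^a(k)\leq f_Q(k)\leq \sum_a \#F^a(k)$, the same use of Lemma \ref{lem:virt ab finite union quotient} to bound each $\#F^a(k)$ between $[\Z^n:H_a]/\#E^a$ and $[\Z^n:H_a]$ (the paper phrases this via explicit surjections $\pi_1,\pi_2$ rather than your coset-counting argument, but the content is identical), and the same appeal to Theorem \ref{thm:index subgroup Z^n}. The only cosmetic difference is that your invocation of the commutation of $\tau_c$ with $\tau_a\circ\varphi\vert_{\Z^n}$ is not actually needed here, since each set $\tau_c(y)-(\tau_a\circ\varphi^a)(c)+H_a$ is already literally a coset of $H_a$.
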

\begin{proof}
    Recall the set-up from Notation \ref{not:set-up and definition Ea}. For any $a\in A$ we define the sets
    \[ \olF(k):=\{[g(k\Z)^n]_{\olvarphi_k}\: \vert\: g\in G\} \quad \text{and} \quad \olFa(k):=\{[i^a(x)(k\Z)^n]_{\olvarphi_k}\: \vert\: x\in \Z^n\}. \]
    Similarly as in the proof of Theorem \ref{thm:twisted conjugacy growth virtually abelian} it follows that
	\[ \olF(k)= \bigcup_{a\in A} \olFa(k) \]
	and thus
	\begin{equation}\label{eq:proof quotient inequalities different layers}
		\max_{a\in A} \# \olFa(k)\leq \# \olF(k)=R(\olvarphi_k)=f_Q(k) \leq \sum_{a\in A} \# \olFa(k).
	\end{equation}
    Thus again it suffices to argue for all $a\in A$ that $\# \olFa(k)=\Theta\left(k^{n-\rank(\Id_{\Z^n}-\tau_a\circ\varphi\vert_{\Z^n})}\right)$ as $k\to \infty$. So fix any $a\in A$ and denote $H:=\Image(\Id_{\Z^n}-\tau_a\circ\varphi\vert_{\Z^n})$. We claim that
    \begin{equation}{\label{eq:proof quotient inequality finite union}}
        \frac{[\Z^n:H+(k\Z)^n]}{\# E^a}\leq \# \olFa(k) \leq [\Z^n:H+(k\Z)^n].
    \end{equation}
    To prove this claim, we fix a set $R\subset \Z^n$ of representatives of $\Z^n/(H+(k\Z)^n)$ and a set $\tilde{R}\subset i^a(\Z^n)$ such that $\{i^a(x)(k\Z)^n\:\vert\: i^a(x)\in \tilde{R}\}$ contains representatives of the $\olvarphi_k$-conjugacy classes in $\olFa(k)$. In particular, we have that $\# R=[\Z^n:H+(k\Z)^n]$ and $\# \tilde{R}=\# \olFa(k)$. Define the maps $\pi_1$ and $\pi_2$ by setting
    \[ \pi_1:E^a\times \tilde{R}\to R \text{ with }\]
    \[\pi_1(c,i^a(x)):= \text{representative in } R \text{ of } \tau_c(x)-(\tau_a\circ\varphi^a)(c)+(H+(k\Z)^n) \]
    and
    \[ \pi_2:R\to \tilde{R} \text{ with } \pi_2(x)\in \tilde{R} \text{ such that } \pi_2(x)(k\Z)^n\sim_{\olvarphi_k}i^a(x)(k\Z)^n. \]
    The maps $\pi_1$ and $\pi_2$ are clearly well-defined. We argue that they are both surjective. First, we fix some $r\in R\subset \Z^n$. Take $i^a(x)\in \tilde{R}$ such that $i^a(x)(k\Z)^n\sim_{\olvarphi_k} i^a(r)(k\Z)^n$. Hence, by Lemma \ref{lem:virt ab finite union quotient} we can choose some $c\in E^a$ such that $r\in \tau_c(x)-(\tau_a\circ\varphi^a)(c)+(H+(k\Z)^n)$. Hence, it follows that $\pi_1(c,i^a(x))=r$ and thus $\pi_1$ is surjective. Next, we take any $i^a(y)\in \tilde{R}$. Fix a representative $x\in R\subset \Z^n$ of $y+(H+(k\Z)^n)$. Since $1_G\in E^a$ and $(\tau_a\circ \varphi^a)(1_G)=1_G$, it follows by Lemma \ref{lem:virt ab finite union quotient} that $i^a(x)(k\Z)^n\sim_{\olvarphi_k} i^a(y)(k\Z)^n$ and thus $\pi_2(x)=i^a(y)$. Hence, $\pi_1$ and $\pi_2$ are both surjective and thus Equation (\ref{eq:proof quotient inequality finite union}) follows.

    Combining Theorem \ref{thm:index subgroup Z^n} and Equation (\ref{eq:proof quotient inequality finite union}) yields that
    \[\# \olFa(k)=\Theta\left(k^{n-\rank(\Id_{\Z^n}-\tau_a\circ\varphi\vert_{\Z^n})}\right) \text{ as } k\to \infty.\]
    Since this holds for all $a\in A$, we can conclude by using Equation (\ref{eq:proof quotient inequalities different layers}) that
    \[f_Q(k)=\Theta\left(k^{n-\min\limits_{a\in A}\rank(\Id_{\Z^n}-\tau_a\circ\varphi\vert_{\Z^n})}\right) \text{ as } k\to \infty.\]
\end{proof}

\begin{corollary}\label{cor:tc and quotient growth equivalent}
    If $G$ is a virtually abelian group that is finitely generated by $S$ and $N\lhd_{\text{fin}}G$ is a torsion-free abelian subgroup of $G$. If $\varphi\in \End(G)$ with $\varphi(N)\subset N$, then the twisted conjugacy growth $f_R^S$ and the quotient growth $f_Q$ of $G$ are quasi-equivalent, i.e. $f_R^S\sim f_Q$.\\
    Moreover,
	\[ f_R^S(k)=\Theta(f_Q(k))=\Theta\left(k^{\rank(N)-\min\limits_{gN\in G/N}\rank(\Id_N-\tau_g\circ\varphi\vert_N)}\right)\text{ as } k\to \infty. \]
\end{corollary}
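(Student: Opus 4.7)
The plan is to observe that both Theorem \ref{thm:twisted conjugacy growth virtually abelian} and Theorem \ref{thm:quotient growth} produce exactly the same polynomial rate, so that the equivalence of $f_R^S$ and $f_Q$ follows by transitivity together with Lemma \ref{lem:asymptotics growth independent}.

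First I would identify $N$ with $\Z^n$, where $n=\rank(N)$, via a fixed $\Z$-basis of $N$. Under this identification the subgroup $(k\Z)^n\subset \Z^n$ corresponds to $kN:=\{x^k\mid x\in N\}$, which is characteristic in $N$ and therefore normal in $G$ (since $N\lhd G$ and conjugation by any element of $G$ restricts to an automorphism of $N$). Because $\varphi(N)\subset N$ and $kN$ is characteristic in $N$, also $\varphi(kN)\subset kN$, so $\varphi$ descends to a well-defined endomorphism $\olvarphi_k$ of $G/kN\cong G/(k\Z)^n$ for every $k\in\N_0$. Hence Theorem \ref{thm:quotient growth} applies verbatim to the triple $(G,N,\varphi)$ and yields
\[ f_Q(k)=\Theta\left(k^{\rank(N)-\min\limits_{gN\in G/N}\rank(\Id_N-\tau_g\circ\varphi\vert_N)}\right)\text{ as } k\to \infty. \]

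Next I would invoke Theorem \ref{thm:twisted conjugacy growth virtually abelian} directly, which gives the twisted conjugacy growth
\[ f_R^S(k)=\Theta\left(k^{\rank(N)-\min\limits_{gN\in G/N}\rank(\Id_N-\tau_g\circ\varphi\vert_N)}\right)\text{ as } k\to \infty. \]
Denoting this common exponent by $d$, both $f_R^S(k)$ and $f_Q(k)$ are bounded above and below by positive constant multiples of $k^d$ for all sufficiently large $k$. Taking ratios, this means there exist $c_1,c_2>0$ and $k_0\in\N_{>0}$ such that $c_1 f_Q(k)\leq f_R^S(k)\leq c_2 f_Q(k)$ for all $k\geq k_0$, i.e.\ $f_R^S(k)=\Theta(f_Q(k))$ as $k\to\infty$.

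Finally, the quasi-equivalence $f_R^S\sim f_Q$ follows from the first assertion of Lemma \ref{lem:asymptotics growth independent}. The only mildly delicate point in this argument is the verification that $kN$ is normal in $G$ and stable under $\varphi$, so that Theorem \ref{thm:quotient growth} really applies to a general torsion-free abelian $N\lhd_{\text{fin}} G$ and not only to the concrete $\Z^n$ in the statement of that theorem; but this is immediate from $kN$ being characteristic in $N$. The remainder is just matching the two asymptotic expressions.
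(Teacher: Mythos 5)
Your proposal is correct and is essentially the argument the paper intends: the corollary is stated as an immediate consequence of Theorem \ref{thm:twisted conjugacy growth virtually abelian} and Theorem \ref{thm:quotient growth} producing the same polynomial exponent, with Lemma \ref{lem:asymptotics growth independent} upgrading the common $\Theta$-estimate to quasi-equivalence. Your extra verification that $kN$ is characteristic in $N$ (hence normal in $G$ and $\varphi$-invariant), so that Theorem \ref{thm:quotient growth} applies to a general torsion-free abelian $N\lhd_{\text{fin}}G$, is a sensible bit of care that the paper leaves implicit.
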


%%%%%%%%%%%%%%%%%%%%%%%%%%%%%%%%%%%%%%%%%%%%%%%%%%%%%%%%%%%%%%%%%%%%%%%%%%%%%%%%%%%%%%%%%%%%%%%%%%%%%%%%%%%%%%%%%%%%%%%%%%%%%%%%%
%%%%%%%%%%%%%%%%%%%%%%%%%%%%%%%%%%%%%%%%%%%%%%%%%%%%Examples%%%%%%%%%%%%%%%%%%%%%%%%%%%%%%%%%%%%%%%%%%%%%%%%%%%%%%%%%%%%%%%%%%%%%
%%%%%%%%%%%%%%%%%%%%%%%%%%%%%%%%%%%%%%%%%%%%%%%%%%%%%%%%%%%%%%%%%%%%%%%%%%%%%%%%%%%%%%%%%%%%%%%%%%%%%%%%%%%%%%%%%%%%%%%%%%%%%%%%%

\section{Examples}\label{sec:examples}
In this section, we work out some concrete examples regarding the different growth functions. For this, we describe a specific family of virtually abelian groups for which Lemma \ref{lem:virt ab finite union} can be strengthened.
\begin{lemma}\label{lem:examples virt ab finite union}
    Let $G$ be a finitely generated virtually abelian group with $N\lhd_{\text{fin}} G$ a torsion-free abelian subgroup and $\varphi\in \End(G)$ with $\varphi(N)\subset N$. If under the set-up from Notation \ref{not:set-up and definition Ea}, also $G/N$ is abelian and the induced morphism $\olvarphi\in \End(G/N)$ is the identity on $G/N$, then for any $a,b\in A$ and for any $x,y\in N$ it holds that
    \[i^a(x)\sim_{\varphi} i^b(y)\quad \Longleftrightarrow\quad \begin{cases}
        a=b\\
        x\in \bigcup\limits_{c\in A} (\tau_c(y)-(\tau_a\circ\varphi^a)(c)+\Image(\Id_N-\tau_a\circ\varphi\vert_N))
    \end{cases}.\]
\end{lemma}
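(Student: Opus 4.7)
The plan is to reduce the statement to Lemma \ref{lem:virt ab finite union} by making two observations that use the hypotheses on $G/N$ and $\olvarphi$, namely that (i) under these hypotheses the set $E^a$ defined in Equation (\ref{eq:definition Ea}) actually coincides with the whole set $A$ of representatives, and (ii) two elements of $G$ lying in different cosets of $N$ can never be $\varphi$-twisted conjugate.

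First I would verify observation (i). By definition, $b\in E^a$ iff $p(a)=p(ba\varphi(b)^{-1})$ in $G/N$. Since $G/N$ is abelian and $\olvarphi$ acts as the identity on $G/N$, the right-hand side equals $p(b)\,p(a)\,\olvarphi(p(b))^{-1}=p(b)\,p(a)\,p(b)^{-1}=p(a)$, so the condition is automatically satisfied and $E^a=A$ for every $a\in A$. Second, I would establish (ii): if $i^a(x)\sim_{\varphi} i^b(y)$, then there exist $c\in A$ and $w\in N$ with $i^a(x)=i^c(w)\,i^b(y)\,\varphi(i^c(w))^{-1}$. Projecting onto $G/N$ and again using that $\olvarphi$ is the identity on the abelian quotient, the conjugation-like product on the right collapses to $p(b)$, so $p(a)=p(b)$ and hence $a=b$ by the choice of $A$ as a set of coset representatives.

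With these two observations in hand, the forward direction ($\Rightarrow$) follows: the $\varphi$-twisted conjugacy of $i^a(x)$ and $i^b(y)$ forces $a=b$ by (ii), and then Lemma \ref{lem:virt ab finite union} applied to $i^a(x)\sim_{\varphi}i^a(y)$ gives the required membership, where the index set $E^a$ in the union is replaced by $A$ thanks to (i). The converse ($\Leftarrow$) is the same Lemma \ref{lem:virt ab finite union} read in the opposite direction, once more using $E^a=A$.

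I do not expect a genuine obstacle here: the whole content of the lemma is the specialisation of Lemma \ref{lem:virt ab finite union} to the case where the $\varphi$-stabiliser of every class in $G/N$ is the full quotient, which is exactly what the two assumptions ($G/N$ abelian and $\olvarphi=\Id_{G/N}$) guarantee. The only point that requires any care is to make sure observation (ii) is recorded separately, since Lemma \ref{lem:virt ab finite union} a priori only covers the case $a=b$ and says nothing about different cosets; without (ii) the strengthened biconditional could not be stated with $a,b\in A$ allowed to differ.
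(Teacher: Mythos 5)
Your proposal is correct and follows essentially the same route as the paper: project the twisted conjugacy relation onto the abelian quotient $G/N$ (where $\olvarphi$ is the identity) to force $a=b$, observe that $E^a=A$ under these hypotheses, and then invoke Lemma \ref{lem:virt ab finite union}. No issues.
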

\begin{proof}
    Note that $i^a(x)\sim_{\varphi} i^b(y)$ if and only if there exists some $c\in A$ and $z\in N$ such that
    \[i^a(x)=i^c(z)i^b(y)\varphi(i^c(z))^{-1}.\]
    Hence, by projecting this equation onto $G/N$, we obtain that $aN=cb\varphi(c)^{-1}N$. Since $G/N$ is abelian and $\varphi(c)N=cN$, we obtain that $aN=bN$ and thus $a=b$ by definition of $A$. Moreover, it holds that $E^a=\{c\in A\:\vert \: aN=ca\varphi(c)^{-1}N\}=A$. Using Lemma \ref{lem:virt ab finite union}, we can now conclude the result.
\end{proof}
From now on, we assume that $G/N$ is abelian and that $\olvarphi=\Id_{G/N}$. In particular, for an arbitrary element $i^a(x)\in G$ (with $a\in A$ and $x\in N$) the $\varphi$-twisted conjugacy class of $i^a(x)$ will be contained in $i^a(\Z^n)$. More precisely, it holds that
\[[i^a(x)]_{\varphi}=i^a\left(\bigcup\limits_{c\in A} (\tau_c(x)-(\tau_a\circ\varphi^a)(c)+\Image(\Id_N-\tau_a\circ\varphi\vert_N))\right).\]
In other words, the projection of any twisted conjugacy class onto $G/N$ is a singleton. Hence, by Theorem \ref{thm:twisted conjugacy class growth} we obtain the following result.
\begin{theorem}\label{thm:examples twisted conjugacy class growth}
    Let $S$ be any finite generating set of a finitely generated virtually abelian group $G$ and $N\lhd_{\text{fin}}G$ a torsion-free abelian subgroup of $G$. Let $\varphi\in \End(G)$ with $\varphi(N)\subset N$ and $\olvarphi=\Id_{G/N}$. If $G/N$ is abelian, then for any $g_0\in G$ it holds that
    \[\beta_{[g_0]_{\varphi}\subset G}^S(r)=\Theta(r^{\rank(\Id_N-\tau_{g_0}\circ\varphi\vert_N)}) \text{ as }r\to \infty.\]
\end{theorem}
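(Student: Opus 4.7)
The plan is to deduce this statement directly from Theorem \ref{thm:twisted conjugacy class growth}, using the hypotheses $G/N$ abelian and $\olvarphi = \Id_{G/N}$ to collapse the maximum appearing in the exponent to a single term. Theorem \ref{thm:twisted conjugacy class growth} already tells us that $\beta_{[g_0]_\varphi \subset G}^S(r) = \Theta(r^k)$, where $k$ is the maximum of $\rank(\Id_N - \tau_g \circ \varphi\vert_N)$ taken over the cosets $gN \in G/N$ that meet $[g_0]_\varphi$. So the whole task reduces to two checks: first, that this set of cosets is the singleton $\{g_0 N\}$; and second, that the exponent depends only on the coset, so that the value obtained from an arbitrary representative agrees with $\rank(\Id_N - \tau_{g_0}\circ\varphi\vert_N)$.

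For the first check, let $a \in A$ be the unique representative of the coset $g_0 N$. Lemma \ref{lem:examples virt ab finite union} asserts that any element $\varphi$-twisted conjugate to some $i^a(x)$ must itself be of the form $i^a(y)$; in particular the entire class $[g_0]_\varphi$ is contained in $i^a(N) = p^{-1}(aN) = p^{-1}(g_0 N)$. Equivalently, the indexing set
\[A_0 := \{a' \in A \mid [g_0]_\varphi \cap p^{-1}(a'N) \neq \emptyset\}\]
equals $\{a\}$. For the second check, the abelianness of $N$ gives, for any $n \in N$ and $x \in N$,
\[\tau_{g_0 n}(x) = g_0 (n x n^{-1}) g_0^{-1} = g_0 x g_0^{-1} = \tau_{g_0}(x),\]
so the restriction $\tau_g\vert_N$, and hence $\rank(\Id_N - \tau_g \circ \varphi\vert_N)$, depends only on the coset $gN$ and therefore agrees with $\rank(\Id_N - \tau_{g_0}\circ\varphi\vert_N)$.

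Plugging $A_0 = \{a\}$ and this coset-invariance into the conclusion of Theorem \ref{thm:twisted conjugacy class growth} then yields
\[\beta_{[g_0]_\varphi \subset G}^S(r) = \Theta\bigl(r^{\rank(\Id_N - \tau_{g_0}\circ\varphi\vert_N)}\bigr) \text{ as } r \to \infty,\]
which is the claimed statement. I do not anticipate any serious obstacle: the hard analytic work lives in Theorem \ref{thm:twisted conjugacy class growth}, and the group-theoretic input needed to restrict the twisted conjugacy class to a single coset is exactly Lemma \ref{lem:examples virt ab finite union}. The only genuinely new verification is the one-line observation that $\tau_g\vert_N$ is a coset invariant, which follows immediately from $N$ being abelian.
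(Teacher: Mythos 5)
Your proposal is correct and follows the same route as the paper: Lemma \ref{lem:examples virt ab finite union} shows that $[g_0]_{\varphi}$ meets only the coset $g_0N$, after which Theorem \ref{thm:twisted conjugacy class growth} gives the result. Your extra observation that $\tau_g\vert_N$ depends only on the coset $gN$ (because $N$ is abelian) is a detail the paper leaves implicit in the statement of Theorem \ref{thm:twisted conjugacy class growth}, and it is verified correctly.
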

\begin{proof}
    By Lemma \ref{lem:examples virt ab finite union} it follows that
    \[\{gN\in G/N\:\vert \: [g_0]_{\varphi}\cap p^{-1}(gN)\neq \emptyset\}=\{g_0N\}.\]
    Hence, we can conclude the result by applying Theorem \ref{thm:twisted conjugacy class growth}.
\end{proof}

In the next example, we illustrate our results for a concrete group and some morphisms.
\begin{example}
    Consider the semi-direct product
    \[G:=\Z^2\semi\Z/2\Z\]
    where the non-trivial element $t$ of $\Z/2\Z$ acts as $-\Id_{\Z^2}$ on $\Z^2$. Hence, the set $A:=\{1_G,t\}$ is a set of representatives of the cosets in $G/\Z^2\cong \Z/2\Z$. Fix any finite generating set $S$ of $G$ and denote with $\varphi\in \End(G)$ the morphism for which $\varphi\vert_{\Z^2}=-\Id_{\Z^2}$ and $\varphi(t)=t$. Note that $\tau_t\circ \varphi\vert_{\Z^2}=\Id_{\Z^2}$ and thus
    \[\rank(\Id_{\Z^2}-\tau_g\circ \varphi\vert_{\Z^2})=\begin{cases}
        2 &\text{if } g\in \Z^2\\
        0 &\text{if } g\in t\Z^2
    \end{cases}.\]
    Thus by Theorem \ref{thm:twisted conjugacy growth virtually abelian}, Theorem \ref{thm:quotient growth} and Theorem \ref{thm:examples twisted conjugacy class growth} it follows that as $r\to \infty$
    \[\beta_{[g_0]_{\varphi}\subset G}^S(r)=\begin{cases}
            \Theta(r^2) &\text{if } g_0\in \Z^2\\
            \Theta(1) &\text{if } g_0\in t\Z^2
        \end{cases} \text{, }\quad f_R^S(r)=\Theta(r^2)\quad \text{ and }\quad f_Q(r)=\Theta(r^2).\]
    Note that the asymptotics of $\beta_{[g_0]_{\varphi}\subset G}^S$ only depends on the coset of $\Z^2$ that contains $g_0$ and thus is the same for any two elements belonging to the same coset. Moreover, the degree of both $f_R^S$ and $f_Q$ equals two (being the rank of $\Z^2$) minus the minimum of the degrees occurring in the description of $\beta_{[g_0]_{\varphi}\subset G}^S$. These observations hold always if $G/N$ is abelian and the induced morphism $\olvarphi=\Id_{G/N}$.\\
    Next, one can check that if $\varphi=\Id_G$, then still $f_R^S(r)=\Theta(f_Q(r))=\Theta(r^2)$ as $r\to \infty$, but
    \[\beta_{[g_0]_{\varphi}\subset G}^S(r)=\begin{cases}
            \Theta(1) &\text{if } g_0\in \Z^2\\
            \Theta(r^2) &\text{if } g_0\in t\Z^2
        \end{cases} \text{ as } r\to\infty.\]
    In particular, the twisted conjugacy class of the identity element does not necessarily grow faster or slower than any other class. It is also possible that the twisted conjugacy class growth is (asymptotically) independent of the considered twisted conjugacy class. One can check that for example for the morphism $\varphi\in \End(G)$ which is defined by
    \[\varphi\vert_{\Z^2}=\begin{pmatrix}
        -1 & 0 \\
        0 & 1
    \end{pmatrix}\quad \text{and}\quad \varphi(t)=t\]
    it holds for any $g_0\in G$ that
    \[f_R^S(r)=\Theta(\beta_{[g_0]_{\varphi}\subset G}^S(r))=\Theta(f_Q(r))=\Theta(r)\text{ as } r\to\infty.\]
\end{example}

%%%%%%%%%%%%%%%%%%%%%%%%%%%%%%%%%%%%%%%%%%%%%%%%%%%%%%%%%%%%%%%%%%%%%%%%%%%%%%%%%%%%%%%%%%%%%%%%%%%%%%%%%%%%%%%%%%%%%%%%%%%%%%%%%
%%%%%%%%%%%%%%%%%%%%%%%%%%%%%%%%%%%%%%%%%%%%%%%%%%%%%%%%%%%%%%%%%%%%%%%%%%%%%%%%%%%%%%%%%%%%%%%%%%%%%%%%%%%%%%%%%%%%%%%%%%%%%%%%%
%%%%%%%%%%%%%%%%%%%%%%%%%%%%%%%%%%%%%%%%%%%%%%%%%%%%%%%%%%%%%%%%%%%%%%%%%%%%%%%%%%%%%%%%%%%%%%%%%%%%%%%%%%%%%%%%%%%%%%%%%%%%%%%%%
%\newpage
%\bibliographystyle{alpha}

\end{document}